\documentclass[12pt,leqno]{article}
\usepackage{amssymb,amsmath,amsthm}
\usepackage[all]{xy}
\usepackage[alphabetic,lite]{amsrefs}
\numberwithin{equation}{section}

\usepackage[top=2cm,bottom=2cm,left=2cm,right=4cm,marginparsep=0.3cm,marginparwidth=3cm,includefoot]{geometry}
\usepackage{mathtools}
\usepackage{bbb}     
\usepackage{enumerate} 
\usepackage{mathrsfs}
\usepackage[colorlinks=true, pdfstartview=FitV, linkcolor=blue, citecolor=blue, urlcolor=blue]{hyperref}

\usepackage{graphicx}
\usepackage{tikz}

\begin{document}
\title{Sheaves for spacetimes: a survey\footnote{
This text is an expanded version of a talk given at IHES in honour of Yan Soibelman (June 2026) as well as at ICMAM Latin America, July 2026}}
\author{Pierre Schapira}
\maketitle

\begin{abstract}
On a spacetime endowed with a time function (such as a globally hyperbolic manifold) we  solve the global Cauchy problem for 
hyperbolic $\shd$-modules in the framework of Sato's hyperfunctions,
illustrating  the effectiveness of microlocal-sheaf theoretic methods in linear  problems. 
\end{abstract}

\tableofcontents

\bigskip
Sections 2--4 are joint work with Masaki Kashiwara (see in particular~\cite{KS90}), sections 5--6 with Beno{\^i}t Jubin (see~\cite{JS16} and also~\cite{Sc26} for complements) and section 7 is extracted  from~\cite{GKS12}.

\section{Introduction and history}
  A \emph{spacetime} is a connected manifold $M$ endowed with a
  non-degenerate quadratic form of signature $(+,-,\dots,-)$.

  \medskip
  This form defines a \emph{cone} in the tangent space $TM$; the affine
  model is the Minkowski space.

  \bigskip
  \textbf{Two classical questions:}
 (i) To define the \emph{future} of $x\in M$ --- the light cone issued
      from $x$. The classical approach uses piecewise smooth paths whose
      derivative belongs to the cone. \emph{We propose here a different
      candidate.}\\
(ii) To solve the \emph{Cauchy problem} (in spaces of
      distributions or analogues) for the wave equations associated with the
      form. \emph{We treat here hyperbolic $\shd$-modules in the space of
      Sato's hyperfunctions.}

\bigskip
There is a huge  literature concerned with 
  globally hyperbolic spacetimes and the Cauchy problem in the framework
  of distributions. Among many references, let us mention,~\cites{BGP07, BS05, BF00, FS11, Ge70, HE73, MS08}:
  
    \medskip
  \begin{center}
  \begin{minipage}{0.85\textwidth}\centering\small
    Brunetti, \ Fredenhagen, \ B\"ar, \ Ginoux, \ Pf\"affle,\\[0.4ex]
    Bernal, \ Fathi, \ Siconolfi, \ Geroch,\\[0.4ex]
    Hawking, \ Minguzzi, \ S\'anchez, \dots
  \end{minipage}
  \end{center}

  \bigskip
     The purpose of this article is to show that, in the framework of \emph{Sato's hyperfunctions}~\cite{Sa60} and
    \emph{microlocal sheaf theory}~\cite{KS90}, the global Cauchy problem for hyperbolic
    $\shd$-modules is well-posed and the proof is  \emph{particularly simple}.

  The classical Cauchy problem for hyperbolic equations is associated  with the names
  of \textbf{Hadamard}, \textbf{Leray}~\cite{Le53}, \textbf{G\aa rding}, 
  \textbf{H\"ormander}  and many others.
  \begin{itemize}
    \item \textbf{Hadamard:} the operator $\partial_t^2-\partial_x$ on $\R^2$ ---
     The  Cauchy problem with data on $\{t=0\}$ is \emph{not} well-posed in
     spaces of  $\Cd^\infty$-functions  or distributions.
    \item \textbf{H\"ormander:} has characterised the hyperbolic differential
      operators with \emph{constant} coefficients.
    \item With \emph{variable} coefficients, very little is known --- except
      for operators ``with simple characteristics'' (microlocally equivalent
      to $\partial_t$).
  \end{itemize}

  \medskip
    The situation changes dramatically when one works with Sato's 
    hyperfunctions~\cites{Sa60, SKK73} instead of distributions:
    \begin{itemize}\small
      \item Bony--Schapira~\cite{BS73} solve the local Cauchy problem for weakly hyperbolic operators.
      \item Kashiwara--Schapira~\cite{KS79}: treat the case of general systems ($\shd$-modules) and give later an essentially topological proof (as we shall see here) in~\cite{KS90}.
      \item Jubin--Schapira~\cite{JS16}  solve  the \emph{global} Cauchy problem for hyperbolic $\shd$-modules by using a time function which allows one to reduce to the local Cauchy problem (as we shall see here).
 \end{itemize}
 
 \section{ Microlocal sheaf theory and the micro-support}
\subsubsection*{Some notations}											
Here, $\cor$ is a unital commutative ring with finite homological dimension. For a locally compact topological space $M$,  $\Derb(\cor_M)$ denotes the bounded derived category of sheaves of $\cor$-modules on $M$. We mainly follow the notations of~\cite{KS90} and  freely use the 6 Grothendieck operations:

\spa
internal: $\cdot\tens\cdot$ and $\rhom(\cdot,\cdot)$ with adjunction $(\tens,\rhom$),

\spa
for a morphism $f\cl M\to N$: inverse images $\opb{f},\,\epb{f}$ and direct images $\roim{f},\,\reim{f}$, with adjunctions
 $(\opb{f},\roim{f})$, $(\reim{f},\epb{f})$.

\spa
For $A\subset M$ locally closed, $\cor_A$ denotes the constant sheaf on $A$ with stalk $\cor$ extended by 
  zero on $M\setminus A$.

 \spa
Here, $\omega_M$ denotes the dualizing complex on $M$, $\omega_M\eqdot\epb{a_M}\cor_\rmpt$, where $a_M\cl M\to\rmpt$. 
 If $M$ is a $\Cd^0$-manifold, $\omega_M$ is isomorphic to $\ori_M\,[\dim M]$, $\ori_M$ being the orientation sheaf. Moreover, 
for $f\cl M\to N$ a continuous map, one sets $\omega_{M/N}\eqdot\omega_M\tens\opb{f}\omega_N^{\otimes -1}$. 

\subsubsection*{Micro-support}		
Microlocal analysis was created by \textbf{Mikio Sato}~\cite{Sa70}. It was adapted to sheaf theory by  \textbf{Kashiwara--Schapira}~\cite{KS82}, who introduced the
  \emph{micro-support} of sheaves.

From now on, $M$ is a real $\Cd^\infty$-manifold, $\pi\cl T^*M\to M$ denotes  its cotangent bundle,

  \medskip
\emph{General idea.}
    For $F\in\Derb(\cor_M)$, the micro-support $\SSi(F)$ is a closed subset of
    $T^*M$ recording the codirections in which sections of $F$ \emph{do not propagate}.
    
   The micro-support has connections, and similar functorial properties,  with the wave-front set of distributions and with the characteristic variety of $\shd$-modules.  However, the wave front set lives on $\sqrt{-1}T^*M$, not on $T^*M$, and  the characteristic variety of $\shd$-modules lives on $T^*X$ for $X$ a complex manifold. See~\S~\ref{section:D} for a precise statement. 

  \begin{definition}
    Let $F\in\Derb(\cor_M)$. The micro-support $\SSi(F)\subset T^*M$ is the
    closed subset defined as follows. For $W\subset T^*M$ open,
    $W\cap\SSi(F)=\emptyset$ if and only if, for any $x_0$ and any real $\Cd^1$-function
    $\phi$ with $p=(x_0;d\phi(x_0))\in W$,
    \[
      \bigl(\rsect_{\{x\,;\,\phi(x)\geq\phi(x_0)\}}F\bigr)_{x_0}\simeq 0.
    \]
  \end{definition}

  \medskip
  In other words, $p\notin\SSi(F)$ if $F$ has no local cohomology supported by
  the closed ``half-spaces'' whose conormal lies in a neighbourhood of $p$. If $U$ is the complementary of this ``half-spaces'' and $u$ is a section of $F$ which vanishes on $U$, then $u$ vanishes on $U\cup V$ for an open neighborhood $V$ of $x_0$. 

\bigskip
\begin{center}
\begin{tikzpicture}[scale=1]
\draw[left color=blue!20, right color=blue!20,blue!30] (-2,0) arc[start angle=-60,end angle=60,x radius=4,y radius=2] coordinate[pos=0.7] (P);
\draw[fill=black] (P) circle (.03);
\draw[->,thick] (P) -- ++(40:0.5);
\node[left] at (P) {${}_{(x_0,\xi_0)}$};
\node at (-1.5,1) {$U$};
\begin{scope}[shift={(4,0)}]
\draw[left color=blue!20, right color=blue!20,blue!20] (-2,0) arc[start angle=-60,end angle=60,x radius=4,y radius=2] coordinate[pos=0.7] (P)  coordinate[pos=0.5] (A) coordinate[pos=0.9] (B);
\node at (-1.2,1) {$U\cup V$};
\draw[left color=blue!20, right color=blue!20,blue!30] 
(B) to[out=-20, in=90, looseness=2] (A);
\draw[blue!30] (-2,0) arc[start angle=-60,end angle=60,x radius=4,y radius=2];
\draw[fill=blue!30, blue!50] (P) circle (.03);
\end{scope}
\end{tikzpicture}
\end{center}
\vspace{1em}

\[
  (x_0,\xi_0)\notin\SSi(F)
  \;\Longrightarrow\;
  R\Gamma(U \cup V;\,F) \xrightarrow{\;\sim\;} R\Gamma(U;\,F)
\]


\subsubsection*{Properties of the micro-support}
  \begin{itemize}
    \item $\SSi(F)$ is $\R^+$-conic (invariant under the $\R^+$-action on
      $T^*M$).
    \item $\SSi(F)\cap T^*_MM=\pi(\SSi(F))=\supp(F)$.
    \item \textbf{Triangular inequality:} for a distinguished triangle
      $F_1\to F_2\to F_3\to[+1]$,
      \[
        \SSi(F_i)\subset\SSi(F_j)\cup\SSi(F_k)\qquad (j\neq k).
      \]
    \item $\SSi(F)$ is \beb co-isotropic \eb (also called ``involutive''), see~\cite{KS90}*{Def.~6.4.1}.
  \end{itemize}

  \bigskip
  Co-isotropy is defined via the Whitney normal cone of $\SSi(F)$ (see below); 
   \begin{definition}[co-isotropic]
   a subset  $S\subset T^*M$ is co-isotropic at $p\in T^*M$ if
    $C_p(S,S)^\perp\subset C_p(S)$, where $C_p(S,S)^\perp\subset T^*_pT^*M$ is identified with
    a subset of $T_pT^*M$ via the Hamiltonian isomorphism.
   \end{definition}
  If $\SSi(F)$ is smooth  in a neighborhood  of $p$, one recovers the usual notion: $T_p\SSi(F)$
  contains its symplectic orthogonal. In particular, $\dim T_p\SSi(F)\geq\dim M$.

\subsubsection*{Examples}
  \begin{itemize}
    \item[(i)] Let $F$ be a non-zero local system on $M$and assume that $M$ is connected. Then 
      $\SSi(F)=T^*_MM$, the zero section of $T^*M$.
    \item[(ii)] Let $N\subset M$ be a  closed submanifold. Then 
      $\SSi(\cor_N)=T^*_NM$, the conormal bundle of $N$ in $M$. Note that one can now define the conormal bundle of any locally closed subset $A\subset M$ by setting $T^*_AM\eqdot\SSi(\cor_A)$. 
    \item[(iii)] Let $X$ be a complex manifod and let $\shm$ be a coherent $\shd_X$-module. 
 Set     $F=\rhom[\shd_X](\shm,\sho_X)$. Then $\SSi(F)=\chv(\shm)$, 
the characteristic variety --- see \S~\ref{section:D}).
    \item[(iv)] $M=\R$ (hence, $T^*M=\R^2$), $F=\cor_I$ with $I$ an interval. The case $I=[0,1)$ and $I=[0,1]$ are shown on the picture.
  \end{itemize}

\begin{center}
\begin{tikzpicture}[scale=1,baseline=0cm]
\draw[dotted,thick, fill=black] (-1,0) ->(0,0);  
\draw[dotted,thick, fill=black] (1.03,0) ->(2,0);  
\draw[blue,fill=black, thick] (0,0) -> (.97,0);
\draw[fill=blue] (0,0) circle (.03);
\draw[blue] (1,0) circle (.03);
\draw[blue, thick] (0,0.03) -> (0,1.2);
\draw[blue, thick] (1,0.03) -> (1,1.2);
\draw (-.8,1.2) node {$k^{}_{[0,1[}$};
\draw (1.5,.3) node {$T^*M$};
\draw (0,-0.3) node {${}_0$};
\draw (1.1,.-0.3) node {${}_1$};
\draw[dotted,thick, fill=black] (3,0) ->(6,0);  
\draw[blue, thick] (4,0) -> (4.97,0);
\draw[fill=blue] (4,0) circle (.03);
\draw[fill=blue] (5,0) circle (.03);
\draw[blue, thick] (4,0.03) -> (4,1.2);
\draw[blue, thick] (5,-0.03) -> (5,-1.2);
\draw (3.2,1.2) node {$k^{}_{[0,1]}$};
\draw (5.5,.3) node {$T^*M$};
\draw (4,-0.3) node {${}_0$};
\draw (5.1,-0.3) node {${}_1$};
\end{tikzpicture}
\end{center}

\subsubsection*{Functoriality}
 Let $f\cl M\to N$ be a morphism of manifolds.  We get the maps
   \[
  \xymatrix@C=2.2em@R=1.8em{
  TM\ar[rd]_-{\tau}\ar[r]^-{f'}&M\times_NTN\ar[d]^-{\tau}\ar[r]^-{f_\tau}&TN\ar[d]^-{\tau}\\
  &M\ar[r]^-f&N,
  }\quad
  \xymatrix@C=2.2em@R=1.8em{
  T^*M\ar[rd]_-{\pi}&M\times_NT^*N\ar[d]^-{\pi}\ar[l]_-{f_d}\ar[r]^-{f_\pi}&T^*N\ar[d]^-{\pi}\\
  &M\ar[r]^-f&N.
  }
  \]
 
  \begin{definition}\label{def:noncar}
 Let $\Lambda\subset T^*N$ be a closed $\R^+$-conic subset. One says that $f$ is
  \emph{non-characteristic} for $\Lambda$ if
  \[
    \opb{f_\pi}\Lambda\cap\opb{f_d}T^*_MM\subset M\times_NT^*_NN.
  \]
  \end{definition}
  Since $\Lambda$ is closed and $\R^+$-conic, this is equivalent to:
  \eqn
  &&f_d\mbox{ is proper on }\opb{f_\pi}\Lambda.
  \eneqn

  \begin{theorem}\label{th:functor1}
 Let  $F\in\Derb(\cor_M)$, $G\in\Derb(\cor_N)$.
    \begin{itemize}
      \item[(a)] If $f$ is \emph{proper} on $\supp(F)$, then
        $\SSi(\roim{f}F)\subset f_\pi\,\opb{f_d}\,\SSi(F).$
      \item[(b)] If $f$ is \emph{non-characteristic} for $\SSi(G)$, then 
        $\SSi(\opb{f}G)\subset f_d\,\opb{f_\pi}\,\SSi(G)$  
and  one has the natural isomorphism  $\opb{f}G\tens\omega_{M/N}\isoto\epb{f}G$.
    \end{itemize}
  \end{theorem}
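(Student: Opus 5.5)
The plan is to factor $f$ through its graph: $f=q\circ i$ with $i\colon M\hookrightarrow M\times N$, $x\mapsto(x,f(x))$, a closed embedding, and $q\colon M\times N\to N$ the projection. For compositions, the geometric maps $f_\pi\opb{f_d}$ (resp.\ $f_d\opb{f_\pi}$) compose correctly, and the non-characteristic condition (resp.\ properness on the support) is inherited by the factors. It therefore suffices to treat two cases: a closed embedding and a smooth projection. In each case I will work locally and use the definition of $\SSi$ through test functions $\phi$ with $(\rsect_{\{\phi\ge\phi(x_0)\}}F)_{x_0}$.

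For (a), let $f=q$ be the projection $M=Z\times N\to N$, proper on $\supp F$. Take $y_0\in N$ and a $\Cd^1$ function $\psi$ on $N$ with $(y_0;d\psi(y_0))\notin f_\pi\opb{f_d}\SSi(F)$. Proper base change gives
\[
\bigl(\rsect_{\{\psi\ge\psi(y_0)\}}\roim{f}F\bigr)_{y_0}\simeq \rsect\bigl(\opb{f}(y_0);\rsect_{\{\psi\circ f\ge\psi(y_0)\}}F\bigr).
\]
At each point $x\in\opb{f}(y_0)$ the covector $d(\psi\circ f)(x)=f_d(x;d\psi)$ lies outside $\SSi(F)$. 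I will use the standard consequence of the definition that the stalk of the local cohomology vanishes there. To globalize along the compact fiber, I will use the microlocal cut-off / non-characteristic deformation lemma (\cite{KS90}, Prop.~2.7.2 type). This step, controlling a whole open neighborhood in $T^*N$ uniformly, is the main technical obstacle. I would handle it by perturbing $\psi$ in a small $\Cd^1$ neighborhood and using compactness of $\supp F\cap\opb{f}(K)$. The closed-embedding case of (a) is elementary: $\SSi(\roim{i}F)=i_\pi\opb{i_d}\SSi(F)$ can be checked by choosing coordinates in which $M$ is a linear subspace and restricting test functions.

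For (b), the projection case $q\colon Z\times N\to N$ is easy. One has $\opb{q}G\simeq G\boxtimes\cor_Z$, whose micro-support is $\SSi(G)\times T^*_ZZ$; this follows from the Künneth formula for local cohomology with product half-spaces, after reducing to test functions adapted to the product, again by deformation. Also $\epb{q}G\simeq\opb{q}G\tens\omega_Z$ always holds for a topological submersion. The real content is the closed embedding $i\colon M\hookrightarrow N$, which is non-characteristic for $\SSi(G)$, meaning $\SSi(G)\cap T^*_MN\subset T^*_NN$ near $M$. Locally I will write $N=M\times\R^d$ and reduce, by induction on codimension, to a hypersurface $M=\{t=0\}$.

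In that hypersurface case, $\pm dt\notin\SSi(G)$ on $M$, and this propagates to a conic neighborhood. The definition then gives that $\rsect_{\{t\ge0\}}G$ and $\rsect_{\{t\le0\}}G$ vanish near $M$. From the triangles $\rsect_{\{t\ge0\}}\to\rsect_{\{t>0\}\cup\dots}$ one obtains $\epb{i}G\simeq\opb{i}G[-1]=\opb{i}G\tens\omega_{M/N}$. For the bound on $\SSi(\opb{i}G)$, take a test function $\phi$ on $M$ and extend it to $\tilde\phi(x,t)=\phi(x)\pm Ct^2$ or $\phi(x)+\lambda t$. The non-characteristic hypothesis, via a properness/compactness argument on $\opb{f_\pi}\SSi(G)$, lets one choose $\lambda$ so that $d\tilde\phi\notin\SSi(G)$. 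Then the local cohomology of $\opb{i}G$ reduces, through the isomorphism $\opb{i}\simeq\epb{i}[1]$ and the commutation $\epb{i}\rsect_{\{\tilde\phi\ge c\}}=\rsect_{\{\phi\ge c\}}\epb{i}$, to that of $G$, which vanishes. The delicate point is uniformity of the extension over a neighborhood. Assembling the two cases gives (b).
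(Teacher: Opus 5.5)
The survey states this theorem without proof; it is quoted from Kashiwara--Schapira, and for (b) the text refers to their Prop.~5.4.13. So I measure your plan against the standard argument. Several pieces are fine: the graph factorization, the induction on codimension, and your derivation of $i^{!}G\simeq i^{-1}G[-1]$ for a non-characteristic hypersurface.

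\textbf{The gap.} The micro-support bound in (b) for the hypersurface $M=\{t=0\}$, which is the heart of the statement, is not proved. Put $H=\rsect_{\{\tilde\phi\geq c\}}G$. Your chain gives $(\rsect_{\{\phi\geq c\}}i^{-1}G)_{x_0}\simeq (i^{!}H)_{x_0}[1]=(\rsect_M H)_{(x_0,0)}[1]$. The definition of $\SSi(G)$, however, only yields $H_{(x_0,0)}=0$. Since $H\simeq G$ on $\{\tilde\phi>c\}$, $H$ does not vanish near $(x_0,0)$. Unlike $i^{-1}$, the functor $i^{!}$ is not computed from the stalk at one point.

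To replace $i^{!}H$ by $i^{-1}H[-1]$ you would need $M$ to be non-characteristic for $H$, i.e.\ a bound on $\SSi(\rsect_{\{\tilde\phi\geq c\}}G)$. Such bounds (for $\rsect_Z$, internal Hom, tensor products) are usually derived from (b) itself via the diagonal, so using them here would be circular. Concretely, Mayer--Vietoris for $N=\{t\geq0\}\cup\{t\leq0\}$ shows that what you must prove is $(\rsect_{\{\pm t\geq 0\}\cap\{\tilde\phi\geq c\}}G)_{(x_0,0)}=0$. This is local cohomology with supports in a \emph{wedge}. Its conormal cone at the edge, generated by $\pm dt$ and $d\tilde\phi$, does avoid $\SSi(G)$ outside the zero section under your hypotheses. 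But the definition only controls smooth half-spaces $\{\phi\geq\phi(x_0)\}$.

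Closing this requires a genuine propagation tool: the non-characteristic deformation lemma, the characterization of $\SSi$ by closed convex cones, or the microlocal Morse lemma. That is where the real work of (b) lies. The ``uniformity of the extension'' you single out is the easy part: it follows from compactness of the half-circle of directions $(\xi_0,\tau)$, $\tau\in\R$, closed up by $(0,\pm1)$. The same confusion between a vanishing stalk and vanishing near a point appears earlier. $\rsect_{\{t\geq0\}}G$ does not vanish near $M$ (it equals $G$ on $\{t>0\}$); only its restriction to $M$ does. That slip is harmless there, since you only use $i^{-1}$ of the triangles.

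\textbf{Two smaller points.} In the projection case of (b), the bound is not easy from the definition either. At a point where $d_z\phi=0$, a test function such as $\psi(y)\pm|z|^2$ is not adapted to the product, and reducing to adapted ones again uses the deformation lemma. So this is a real step, not a formality.

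Conversely, (a) is easier than you fear and needs no factorization. For any $f$ proper on $\supp F$, once all stalks of $\rsect_{\{\psi\circ f\geq\psi(y_0)\}}F$ along the compact fibre vanish, its restriction to the fibre is zero, and proper base change gives $0$ directly. No uniform control in $T^*N$ is needed either. Since $f_\pi$ is proper on $\opb{f_d}\SSi(F)$, the set $f_\pi\opb{f_d}\SSi(F)$ is closed, so its complement is an admissible open set $W$ in the definition.
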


\subsubsection*{Micro-support along a submanifold}
  When $f$ is \emph{characteristic}, the inverse-image statement becomes
  delicate. We need to define the micro-support along a submanifold.
  
   \begin{definition}[Whitney normal cone]
    For $S\subset M$ locally closed and $N\subset M$ a submanifold, the Whitney normal cone of $S$ along $N$ is the closed cone of  the normal bundle $T_NM$ defined  in a coordinate system $(x',x'')$ with $N=\{x'=0\}$:
    \[
      (x''_0;v_0)\in C_N(S)\subset T_NM
    \]
    iff there is a sequence $(x_n,c_n)\in S\times\R^+$ with
    $x'_n\to0$, $x''_n\to x''_0$, $c_n x'_n\to v_0$.
  \end{definition}

  For $S_1,S_2\subset M$, $C(S_1,S_2)$ is the normal cone of
  $S_1\times S_2$ along the diagonal, $TM\simeq T_\Delta(M\times M)$.

  \medskip
  Let $j_M\cl M\into X$ be a closed embedding. The projection
  $\pi\cl T^*_MX\to M$ defines
    \[
   \xymatrix@C=2.2em@R=1.8em{
  T^*T^*_MX&T^*_MX\times_MT^*M\ar@{_(->}[l]_-{\pi_d}\ar[r]^-{\pi_\pi}&T^*M}.
  \]
  Restricting the embedding $\pi_d$ to the zero-section of $T^*_MX$ and using  the Hamiltonian isomorphism, one obtains the embedding
  \eq\label{eq:embed}
    T^*M\hookrightarrow T^*T^*_MX\simeq T_{T^*_MX}T^*X.
  \eneq
  In coordinates $(x,y;\xi,\eta)$ on $T^*X$ with $M=\{y=0\}$ and
  $T^*_MX=\{y=\xi=0\}$, this map is given by $(x;\xi)\mapsto(x,0;\xi,0)$.

  \begin{definition}
    For $F\in\Derb(\cor_X)$, set
    \[
      \SSi_M(F)=T^*M\cap C_{T^*_MX}(\SSi(F)),
    \]
    where $C_{T^*_MX}(\SSi(F))\subset T_{T^*_MX}T^*X$ is the Whitney normal cone 
    of $\SSi(F)$ along  $T^*_MX$ and $T^*M$ is embedded in $T_{T^*_MX}T^*X$  by~\eqref{eq:embed}.
  \end{definition}
  
 \begin{theorem}[\cite{KS90}*{Cor.~6.4.4, Prop.~6.2.4}]
    Let $X$ be a manifold, $M$ a closed submanifold, $F\in\Derb(\cor_X)$.
    Then
    \[
      \SSi(\rsect_MF)\subset\SSi_M(F).
    \]
  \end{theorem}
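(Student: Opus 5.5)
The plan is to follow the strategy of \cite{KS90}, Chapter~6: first reduce the statement to an estimate for the inverse image $\opb{j_M}$ along a characteristic embedding, and then prove that estimate by deforming $M$ inside $X$. Since $\rsect_MF\simeq \reim{j_M}\epb{j_M}F$ and $j_M$ is a closed embedding, Theorem~\ref{th:functor1}~(a), applied to the proper map $j_M$, gives $\SSi(\roim{j_M}G)=j_{M\pi}\opb{j_{Md}}\SSi(G)$. It therefore suffices to bound $\SSi(\epb{j_M}F)$ by $\SSi_M(F)$, viewed in $T^*M$. The question is local on $M$, so we work in coordinates $(x,y)$ with $M=\{y=0\}$ and use the embedding~\eqref{eq:embed}.

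The core step is the estimate $\SSi(\epb{j_M}F)\subset\SSi_M(F)$, together with the analogous bound for $\opb{j_M}F$. Fix $(x_0;\xi_0)\notin\SSi_M(F)$. Unwinding the definition of the Whitney normal cone, there exist a conic neighbourhood $U$ of $\xi_0$, a neighbourhood $W$ of $x_0$ and $\varepsilon>0$ with the following property: if $(x,y;\xi,\eta)\in\SSi(F)$, $|y|<\varepsilon$, $x\in W$ and $\xi\in U$, then $|\eta|\,|y|$ is large compared with $|\xi|\,|y|$. Concretely, no points of $\SSi(F)$ satisfy $|y|\,|\eta|\le \varepsilon|y||\xi|$ with $y\neq0$; this uses the joint rescaling in both the base direction $y$ and the fibre direction $\eta$. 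The idea is then to write $\epb{j_M}F$ as the limit of $\rsect$ over tubes $Z_t=\{|y|\le t\}$ and to apply the non-characteristic deformation lemma (\cite{KS90}, Prop.~2.7.2), or equivalently to reduce to the $\opb{}$ case through $\epb{j_M}F\simeq \opb{j_M}\rsect_{Z_t}F$. The Whitney condition is exactly what guarantees that the test half-spaces $\{\phi(x)\ge\phi(x_0)\}\times\{|y|\le t\}$, with $d\phi(x_0)=\xi_0$, have conormals on their boundary $|y|=t$ that avoid $\SSi(F)$ uniformly as $t\to0$. One then checks the vanishing of $(\rsect_{\{\phi\ge\phi(x_0)\}}\epb{j_M}F)_{x_0}$ by the usual propagation argument on an increasing family of open sets.

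An alternative route is to work microlocally with $\mu_M(F)$: it is supported on $T^*_MX$, and its micro-support in $T^*T^*_MX$ is contained in $C_{T^*_MX}(\SSi(F))$ (\cite{KS90}, Thm.~6.4.1). Then $\rsect_MF\simeq\roim{\pi}\mu_M F$ restricted to the zero section, and Theorem~\ref{th:functor1}~(a) applied to $\pi\colon T^*_MX\to M$ would give the statement directly. The catch is that $\pi$ is not proper, so one needs the conic version of (a) for $\R^+$-conic sheaves (\cite{KS90}, Prop.~5.4.4/5.4.5). The result is exactly $T^*M\cap C_{T^*_MX}(\SSi(F))$, because $\pi_\pi\opb{\pi_d}$ picks out the points lying over the zero section of $T^*_MX$.

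The main obstacle is proving the bound $\SSi(\mu_MF)\subset C_{T^*_MX}(\SSi(F))$, which in the first route appears as the uniform non-characteristic deformation. I would first try to obtain it from the definition of $\mu_M$ as a Fourier–Sato transform of the specialization $\nu_M$. The two ingredients would be the estimate $\SSi(\nu_MF)\subset C_{T^*_MX}(\SSi(F))$, proved via the deformation to the normal cone $\tilde X_M$ and Theorem~\ref{th:functor1}~(b) applied off the special fibre, and the invariance of the micro-support under Fourier–Sato transform, namely the Hamiltonian identification $T^*T_MX\simeq T^*T^*_MX$.
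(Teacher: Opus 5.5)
Your reduction to bounding $\SSi(\epb{j_M}F)$ is the right reading of a statement that compares a subset of $T^*X$ with a subset of $T^*M$. Your second route is essentially the argument in [KS90] behind the paper's citation; the paper itself gives no proof beyond that citation. That route uses $\epb{j_M}F\simeq\roim{\pi}\mu_MF$, then the bound $\SSi(\mu_MF)\subset C_{T^*_MX}(\SSi(F))$, then the direct-image estimate for a conic object on the vector bundle $T^*_MX\to M$. The last step keeps only covectors at the zero section with vanishing fibre component, i.e.\ the image of \eqref{eq:embed}. As an outline that imports the bound on $\SSi(\mu_MF)$ from [KS90], it is sound.

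Your own sketch of that bound does not hold up, though. Off the special fibre, $\tilde p\colon\Omega=\{t>0\}\to X$ is a submersion, so Theorem~\ref{th:functor1}~(b) there only gives the trivial formula for $\SSi(\opb{\tilde p}F)$. Since $\nu_MF=\opb{s}\roim{j}\opb{\tilde p}F$, with $j\colon\Omega\into\tilde X_M$ and $s\colon T_MX\into\tilde X_M$, all the content lies in $\opb{s}\roim{j}$. That is a direct image by a non-proper open embedding, followed by restriction to the hypersurface $\{t=0\}$, which is characteristic: already for $F=\cor_X$ one gets $\roim{j}\cor_\Omega\simeq\cor_{\{t\geq0\}}$. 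Neither part of Theorem~\ref{th:functor1} controls this; it is itself a characteristic inverse-image problem of the very kind you are trying to solve. So this step has to be taken from [KS90, Ch.~VI], exactly as the paper does.

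The first route, which you present as the core step, is wrong as written.

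\begin{itemize}
\item \emph{The unwinding of the hypothesis.} Negating $(x_0;\xi_0)\in\SSi_M(F)$ and using conicity gives $W$, $\varepsilon$, a conic neighbourhood $U$ of $\xi_0$ and $c>0$ such that $|y|\,|\eta|\geq c\,|\xi|$ for all $(x,y;\xi,\eta)\in\SSi(F)$ with $x\in W$, $|y|<\varepsilon$, $\xi\in U$. In particular there are no such points at all on $\{y=0\}$. Your version, $|y|\,|\eta|>\varepsilon|y|\,|\xi|$ for $y\neq0$, just says $|\eta|>\varepsilon|\xi|$. It is strictly weaker, and the conclusion fails under it.

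\item \emph{A counterexample.} Take $X=\R^2$, $M=\{y=0\}$ and $F=\cor_Z$ with $Z=\{y\geq -x^3\}$. Off the zero section, $\SSi(F)$ consists of the points $(x,-x^3;3\lambda x^2,\lambda)$ with $\lambda>0$. So $|\eta|/|\xi|=1/(3x^2)$, and your condition holds at $(0;1)$. Yet $|y|\,|\eta|/|\xi|=|x|/3\to0$, so $(0;1)\in\SSi_M(F)$. Directly: $\epb{j_M}F$ vanishes on $\{x<0\}$, while the excision triangle on a small box gives $(\epb{j_M}F)_0\simeq\cor[-1]$. Hence $(0;1)\in\SSi(\epb{j_M}F)$, so no argument using only your reformulated hypothesis can succeed.

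\item \emph{The tube identity.} The identity $\epb{j_M}F\simeq\opb{j_M}\rsect_{Z_t}F$ is false. $M$ lies in the interior of $Z_t$, so the right-hand side is $\opb{j_M}F$. For $F=\cor_X$ it gives $\cor_M$, not $\ori_{M/X}$ shifted by minus the codimension.

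\item \emph{The deformation step.} The hypothesis at $(x_0;\xi_0)$ says nothing about covectors with $\xi\notin U$, in particular about the conormals $(0;\eta)$ to $\{|y|=t\}$. So there is no uniform non-characteristic deformation to run.
\end{itemize}

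This route should be dropped.
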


\section{ Links with $\shd$-modules and the characteristic variety}\label{section:D}
Let $(X,\sho_X)$ be a complex manifold and $\shd_X$ the sheaf of finite-order
  holomorphic differential operators on $X$. We refer to~\cite{Ka03} for an exposition of   $\shd$-module theory. 

   \medskip
  A coherent   $\shd_X$-module $\shm$ admits locally a finite free resolution
  \[
    0\to\shd_X^{N_n}\xrightarrow{\,\cdot P_{n-1}\,}\cdots\to
    \shd_X^{N_1}\xrightarrow{\,\cdot P_0\,}\shd_X^{N_0}\to\shm\to0. 
  \]
 The complex of its holomorphic  solutions  $\rhom[\shd_X](\shm,\sho_X)$ is then represented by the complex
  \[
    0\to\sho_X^{N_0}\xrightarrow{\,P_0\cdot\,}
    \sho_X^{N_1}\xrightarrow{\,P_1\cdot\,}\cdots
    \xrightarrow{\,P_{n-1}\cdot\,}\sho_X^{N_n}\to0.
  \]
  
\subsubsection*{The characteristic variety}

To $\shm\in\Derb_\coh(\shd_X)$ is associated its \emph{characteristic
  variety} $\chv(\shm)\subset T^*X$, a closed $\C^\times$-conic analytic
  variety.

  \medskip
  If $\shm=\shd_X/\shd_X\cdot P$ with $P$ of order $m$,
  \[
    \chv(\shm)=\{(x;\xi)\in T^*X\,;\,\sigma_m(P)(x;\xi)=0\},
  \]
  the zero-set of the \emph{principal symbol} of $P$.
\begin{theorem}[Sato--Kawai--Kashiwara~\cite{SKK73}]
 Let $\shm\in\Derb_\coh(\shd_X)$. Then    $\chv(\shm)$ is \beb co-isotropic\eb in $T^*X$.
    \end{theorem}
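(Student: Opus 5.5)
The quickest route is through sheaf theory, using the facts already recorded in \S2 rather than Gabber's purely algebraic proof. We work on $X$ regarded as a real manifold of dimension $2n$. There $T^*X$, the cotangent bundle of the underlying real manifold, is identified with the holomorphic cotangent bundle via $\xi\mapsto$ the real part of $\xi$, and the real symplectic form corresponds to the real part of the holomorphic symplectic form. The statement is local, and the micro-support triangle inequality lets us reduce it to the case of a single coherent module $\shm$, concentrated in one degree.

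The key input is Example~(iii) of \S2: with $F=\rhom[\shd_X](\shm,\sho_X)$ one has
\[
\SSi(F)=\chv(\shm).
\]
The inclusion ``$\subset$'' follows from the Cauchy--Kovalevskaya theorem in its precise form, which says that $\rsect_{\{\phi\ge\phi(x_0)\}}(F)_{x_0}=0$ when $d\phi(x_0)$ is non-characteristic. Granting the equality, the theorem follows at once from the involutivity of micro-supports, \cite{KS90}*{Th.~6.5.4}: $\SSi(F)$ is co-isotropic in the real sense of the definition above. We then check that a $\C^\times$-conic complex analytic subset is co-isotropic for the real form exactly when it is co-isotropic for the holomorphic form. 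This holds because $\mathrm{Re}\,\omega$-orthogonals of complex subspaces coincide with $\omega$-orthogonals, and the Whitney cones $C_p(S,S)$ and $C_p(S)$ of a complex analytic set are complex cones.

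The main obstacle is the reverse inclusion $\chv(\shm)\subset\SSi(F)$, which is needed so that co-isotropy of $\SSi(F)$ transfers to $\chv(\shm)$. It is the delicate half of \cite{KS90}*{Th.~11.3.3}. To prove it, I would pass to microfunctions: $\chv(\shm)=\supp(\she_X\tens\opb{\pi}\shm)$, where $\she_X$ is the sheaf of microdifferential operators. Then I would use that, outside the zero section, a point $p\in\chv(\shm)$ carries nonzero microlocal holomorphic solutions $\mu hom(\C,\ldots)$, via quantized contact transformations and the results of Sato--Kawai--Kashiwara, to show that the local cohomology condition fails at $p$. Alternatively, the statement being cited from \cite{SKK73}, one may simply take Example~(iii) as given, as the paper does. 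In that case the proof reduces to the involutivity theorem plus the real/complex comparison of co-isotropy above.
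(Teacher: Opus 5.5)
Your proposal is correct and follows the paper's route: the paper obtains the theorem as a corollary of the equality $\SSi(\rhom[\shd_X](\shm,\sho_X))=\chv(\shm)$ of \cite{KS90}*{Th.~11.3.3} combined with the involutivity of micro-supports, and it handles the delicate inclusion $\chv(\shm)\subset\SSi(F)$ through microlocal operators, much as you sketch. Your comparison of real and holomorphic co-isotropy is a correct supplement that the paper leaves implicit, since it uses the real definition throughout; the preliminary reduction to a single module is unnecessary, because \cite{KS90}*{Th.~11.3.3} already holds for complexes in $\Derb_\coh(\shd_X)$.
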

  A purely algebraic proof of this result has been given later by O.~Gabber~\cite{Ga81}; partial results were first obtained by Guillemin--Quillen--Sternberg~\cite{GQS70}.

\subsubsection*{Cauchy--Kowalevska--Kashiwara}
Let $\shm\in\Derb_\coh(\shd_X)$ and let $f\cl Y\to X$ be a morphism of complex manifolds.
One denotes by 
  $\shm_Y\eqdot\dopb{f}\shm$  the induced $\shd_Y$-module on $Y$ (see~\cite{Ka03}).

  \medskip
  One says  that  $Y$ is \emph{non-characteristic} for $\shm$ if $f$ is
  non-characteristic for $\chv(\shm)$:
  \[
    \opb{f_\pi}\chv(\shm)\cap\opb{f_d}T^*_YY\subset Y\times_XT^*_XX.
  \]
  \begin{theorem}[Kashiwara, master's thesis~\cite{Ka70}]
    If $Y$ is non-characteristic for $\shm$, then $\shm_Y$ is
    $\shd_Y$-coherent and there is a canonical isomorphism
    \[
      \opb{f}\rhom[\shd_X](\shm,\sho_X)\isoto
      \rhom[\shd_Y](\shm_Y,\sho_Y).
    \]
  \end{theorem}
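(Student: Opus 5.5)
The plan is to reduce to the case of a closed embedding. Write $f$ as the composition of the graph embedding $Y\hookrightarrow Y\times X$ and the projection $Y\times X\to X$.

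\textbf{The projection.} For the smooth projection $p\cl Y\times X\to X$, both statements are classical. The induced module is $\shm_{Y\times X}=\sho_Y\boxtimes\shm$, which is coherent, and it is non-characteristic automatically. The isomorphism
\[
\opb{p}\rhom[\shd_X](\shm,\sho_X)\isoto\rhom[\shd_{Y\times X}](\shm_{Y\times X},\sho_{Y\times X})
\]
follows from the relative holomorphic de Rham complex: it resolves $\opb{p}\sho_X$ by $\sho_{Y\times X}$. Moreover, the non-characteristic condition for $f$ is equivalent to the non-characteristic condition for the graph embedding with respect to $\chv(\sho_Y\boxtimes\shm)=T^*_YY\times\chv(\shm)$. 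So it suffices to treat a closed embedding $Y\subset X$.

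\textbf{The closed embedding.} Working locally, and by induction on codimension, I reduce to a hypersurface $Y=\{t=0\}$ with coordinates $(t,z)$. Take $\shm$ to be a single coherent module. Everything is local and both sides are way-out functors, so dévissage gives the complex case.

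\textbf{Coherence.} Non-characteristic means $dt\notin\chv(\shm)$ over $Y$. Choose a good filtration on $\shm$. Then $\mathrm{gr}\,\shm$ is finite over $\mathrm{gr}\,\shd_X$, and its support misses $\{\tau\neq0,\ \zeta=0\}$ over $Y$. By the Nullstellensatz, some power $\tau^N$ is integral modulo $\zeta$ over $\sho_{T^*Y}$-type symbols. Lifting this, $\shm$ is locally finitely generated over $\shd_{X\to}$ relative to $\shd_Y$: concretely, $\shm/t\shm=\shm_Y$ is generated over $\shd_Y$ by the images of $\partial_t^j u_i$ with $j<N$. This gives coherence via a Weierstrass-type division theorem for the sheaf of relative microdifferential operators, or for $\shd$ filtered by order in $\partial_t$.

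\textbf{The isomorphism.} For the cyclic case $\shm=\shd_X/\shd_X P$, with $P$ of order $m$ and coefficient of $\partial_t^m$ invertible, we have $\shm_Y\simeq\shd_Y^m$. The comparison map
\[
\opb{f}\rhom(\shm,\sho_X)\to\sho_Y^m,\qquad u\mapsto(u,\partial_tu,\dots,\partial_t^{m-1}u)|_{t=0},
\]
is exactly the Cauchy–Kowalevska theorem. Existence and uniqueness of holomorphic solutions near each point of $Y$ give bijectivity on $H^0$. Surjectivity of $P$ on germs gives vanishing of $H^1$, by solving $Pu=g$ with zero Cauchy data.

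For general $\shm$, I would use the division argument to present $\shm$ locally as the cokernel of a matrix operator. That operator is of Kowalevskian type in $\partial_t$: it is an $\shd_Y[\partial_t]$-presentation, reducing to a first-order system $\partial_t U=A(t,z,\partial_z)U$ after the division step. Then I apply a Cauchy–Kowalevska theorem for such systems, with $A$ of order $\le1$ in $\partial_z$ in the appropriate weighted sense.

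\textbf{Main obstacle.} This last point is the main difficulty. Obtaining a global-in-$\shm$ Kowalevskian normal form, where the order in $\partial_z$ is controlled so that Cauchy–Kowalevska applies, requires Kashiwara's division theorem rather than naive elimination. I would try to obtain it by passing to microdifferential operators $\mathcal{E}_X$ near $dt$: there $\shm$ is zero, and one can use the Späth–Weierstrass division for $\mathcal{E}_X$.
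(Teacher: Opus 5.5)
The paper gives no proof of this theorem; it only quotes it from Kashiwara's thesis, so your proposal has to stand on its own. Several parts are fine:
\begin{itemize}
\item the graph factorization;
\item the passage from complexes to modules;
\item the reduction to codimension one;
\item the cyclic case $\shm=\shd_X/\shd_XP$ with $\sigma(P)(y_0;dt)\neq0$, which is exactly the classical Cauchy--Kowalevska theorem, including $H^1$.
\end{itemize}

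The genuine gap is the passage from cyclic modules to a general coherent $\shm$. You leave it as the ``main obstacle'', and the route you sketch cannot work as stated. Suppose, as you write, $\shm$ reduces to a first-order system $\partial_tU=AU$, i.e.\ $\shm\simeq\shd_X^N/\shd_X^N\cdot(\partial_t-A)$ with $A$ a square matrix in $\shd_{X/\C}$. Then division by $\partial_t-A$ shows that $\shm$ is free over $\shd_{X/\C}$, and so $\shm_Y\simeq\shd_Y^N$ is free over $\shd_Y$.

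Now take $X=\C^2$, $Y=\{t=0\}$, $\shm=\sho_X$. Here $\chv(\shm)$ is the zero section, so $Y$ is non-characteristic, yet $\shm_Y=\sho_Y$ is not $\shd_Y$-free. In general a non-characteristic module carries relations not involving $\partial_t$ (here $\partial_z\cdot1=0$). Its presentation is therefore overdetermined, the determined Cauchy--Kowalevska theorem does not apply, and no weighting of orders repairs this. Moreover, for non-cyclic $\shm$ you never address $\mathcal{E}xt^j$ for $j\geq1$, whereas the statement is an isomorphism in all degrees.

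The missing idea is a d\'evissage of exactly the kind you already use for complexes.
\begin{itemize}
\item For a local section $u$ of $\shm$, $\chv(\shd_Xu)\subset\chv(\shm)$ avoids $(y_0;dt)$. Hence the graded ideal $\mathrm{gr}\,\mathrm{Ann}(u)$ contains a homogeneous element not vanishing there, i.e.\ there is $P\in\mathrm{Ann}(u)$ with $\sigma(P)(y_0;dt)\neq0$.
\item This gives an epimorphism $\bigoplus_i\shd_X/\shd_XP_i\to\shm$ with all $P_i$ non-characteristic. Its kernel is coherent and, by additivity of $\chv$, again non-characteristic.
\item Iterating gives a resolution $\cdots\to\shl_1\to\shl_0\to\shm\to0$ by finite sums of non-characteristic cyclic modules.
\item Both sides of the comparison morphism are contravariant triangulated functors sending modules into $D^{\geq0}$; for the right side this holds because $\shm_Y$ lives in degrees $\leq0$.
\item Apply them to the triangle $\shk_{k+1}[k]\to\shl_{\leq k}\to\shm\xrightarrow{+1}$, where $\shl_{\leq k}=(\shl_k\to\cdots\to\shl_0)$ and $\shk_{k+1}=\ker(\shl_k\to\shl_{k-1})$. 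This shows that the comparison for $\shm$ agrees, in all degrees $<k$, with that for the finite complex $\shl_{\leq k}$.
\item The comparison for $\shl_{\leq k}$ is an isomorphism by your cyclic case.
\end{itemize}
No Kowalevskian normal form and no microdifferential division are needed.

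Two smaller points.
\begin{itemize}
\item Finite generation of $\shm/t\shm$ over $\shd_Y$ is not coherence. You should say that $\mathrm{gr}\,\shm$ being finite over $\sho_X[\zeta]$ makes $\shm$ coherent over $\shd_{X/\C}$. Then $\shm/t\shm$ and $\ker(t)$ are coherent over $\shd_{X/\C}/t\shd_{X/\C}\simeq\shd_Y$, since $t$ is central in $\shd_{X/\C}$.
\item The induction on codimension needs the estimate $\chv(\shm_{Y_1})\subset f_d\opb{f_\pi}\chv(\shm)$ for the intermediate hypersurface $Y_1$. It comes from the same filtration argument, but it must be stated.
\end{itemize}
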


\subsubsection*{Micro-support and  characteristic variety}
  \begin{theorem}[\cite{KS90}*{Th.~11.3.3}]
    Let  $\shm\in\Derb_\coh(\shd_X)$ and let $F=\rhom[\shd_X](\shm,\sho_X)$ be  its complex of
    holomorphic solutions. Then
    \[
      \SSi(F)=\chv(\shm).
    \]
  \end{theorem}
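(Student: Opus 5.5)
The statement to prove is \cite{KS90}*{Th.~11.3.3}: for $\shm\in\Derb_\coh(\shd_X)$ and $F=\rhom[\shd_X](\shm,\sho_X)$, one has $\SSi(F)=\chv(\shm)$. I would prove the two inclusions separately, treating $\SSi(F)\subset\chv(\shm)$ as the main one.

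\emph{Reductions.} Both sides behave well under distinguished triangles: the micro-support satisfies the triangular inequality, and $\chv$ is additive on short exact sequences. The question is also local on $X$. So, by truncation, I would reduce to a single coherent $\shd_X$-module $\shm$. Locally $\shm$ has a good filtration, so locally $\chv(\shm)$ is the support of $\mathrm{gr}\,\shm$.

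\emph{Inclusion $\SSi(F)\subset\chv(\shm)$.} Fix $p=(x_0;\xi_0)\notin\chv(\shm)$ and a real $\Cd^1$-function $\phi$ with $d\phi(x_0)$ close to $\xi_0$. I must show $(\rsect_{\{\phi\geq\phi(x_0)\}}F)_{x_0}\simeq0$.
\begin{enumerate}[(1)]
\item Since $\chv(\shm)$ is closed and $\C^\times$-conic, it is enough to test real-analytic (even real-quadratic) $\phi$ with $d\phi(x_0)$ near $\xi_0$. For this I would use the standard refined cutoff characterization of $\SSi$ from \cite{KS90}, which allows replacing $\Cd^1$ test functions by a family of convex or quadratic ones.
\item For such $\phi$ the real hypersurface $\{\phi=\phi(x_0)\}$ is non-characteristic for $\shm$ near $x_0$. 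I would then invoke the Zerner/Bony--Schapira type unique-continuation and extension theorem for holomorphic solutions across non-characteristic real hypersurfaces. Concretely: if $Y_t=\{\phi=t\}$ is a family of non-characteristic real hypersurfaces, then solutions of $\shm$ defined on $\{\phi<t\}$ extend uniquely across $Y_t$, together with all higher $\ext$'s.
\item To get the full cohomological statement, I would work with the sets $U_t=\{\phi<t\}\cap B$. A Cauchy--Kowalevska--Kashiwara argument, in its real-hypersurface form with holomorphic data on a complex tangent family, gives $R\Gamma(U_{t+\varepsilon};F)\isoto R\Gamma(U_t;F)$. The non-characteristic deformation lemma \cite{KS90}*{Prop.~2.7.2} then yields the vanishing of the local cohomology.
\end{enumerate}
I expect step~(3) to be the \textbf{main obstacle}: passing from Cauchy--Kowalevska, which concerns complex hypersurfaces, to the real half-space statement. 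To handle it I would use Kashiwara's trick of foliating by complex hypersurfaces tangent to $\{\phi=t\}$, together with the precise uniform radius of convergence in the Cauchy--Kowalevska--Kashiwara theorem stated above.

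\emph{Inclusion $\chv(\shm)\subset\SSi(F)$.} I would use involutivity. $\SSi(F)$ is a closed conic co-isotropic subset, and by the first inclusion it lies in $\chv(\shm)$. If $\chv(\shm)\not\subset\SSi(F)$, choose $p\in\chv(\shm)\setminus\SSi(F)$ outside the zero section. Near $p$ one has $F\simeq0$ microlocally. By Kashiwara's theorem on microdifferential systems, $\mathcal{E}_X\tens\shm$ is nonzero at $p$, and after a quantized contact transformation its solutions in $\mathcal{C}$ (microfunction-type sheaves obtained by $\mu hom$ with $F$) are nonzero. This contradicts the microlocal vanishing, because those solutions are computed from $\mu hom(\cdot,F)$, which is supported in $\SSi(F)$.
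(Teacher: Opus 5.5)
\textbf{Main gap: the inclusion $\chv(\shm)\subset\SSi(F)$.} Involutivity does no work here. A closed conic co-isotropic subset of $\chv(\shm)$ can be a proper subset (e.g.\ the zero section $T^*_XX$ when $\supp\shm=X$, or the empty set). So knowing that $\SSi(F)$ is co-isotropic and contained in $\chv(\shm)$ gives nothing.

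Everything therefore rests on your claim that, at $p\in\chv(\shm)$, the solutions of $\she_X\tens[\opb{\pi}\shd_X]\opb{\pi}\shm$ in some microfunction-type sheaf are non-zero. You give no mechanism for this. The equality $\chv(\shm)=\supp(\she_X\tens[\opb{\pi}\shd_X]\opb{\pi}\shm)$ only says that the microlocalized module is non-zero. To turn this into non-vanishing of solutions, the solution (or tensor) functor with values in that sheaf must be conservative on $\she_X$-modules. A quantized contact transformation does not provide this: a general coherent $\she_X$-module has no normal form near an arbitrary point of its characteristic variety, since structure theorems need hypotheses such as simple characteristics or regularity. Moreover, a complex manifold carries no canonical sheaf of microfunctions, so you would have to say which sheaf you mean.

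The missing ingredient, which is the heart of the paper's argument, is that $\she^\R_X=\mu_\Delta(\sho_{X\times X}^{(0,d_X)})[d_X]$ is \emph{faithfully flat} over $\she_X$. This gives $\chv(\shm)=\supp(\she^\R_X\tens[\opb{\pi}\shd_X]\opb{\pi}\shm)$. The right-hand side is the support of $\mu_\Delta$ of a solution complex of $\shm$ on $X\times X$, and $\supp\mu_\Delta(\cdot)\subset\SSi(\cdot)\cap T^*_\Delta(X\times X)$. The latter is contained in $\SSi(F)$ under $T^*_\Delta(X\times X)\simeq T^*X$. Your idea of computing microlocal solutions through $\mu hom(\cdot,F)$, whose support lies in $\SSi(F)$, points in this direction, but without faithful flatness it proves nothing.

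\textbf{Reduction by truncation.} Reducing to a single module ``by truncation'' is legitimate only for $\SSi(F)\subset\chv(\shm)$. The triangular inequality bounds micro-supports from above, so for a complex it cannot exclude cancellation over $\chv(H^i\shm)\cap\chv(H^j\shm)$. The paper avoids this because $\she^\R_X\tens[\opb{\pi}\shd_X]\opb{\pi}(\cdot)$ is exact and detects $\chv$ on each cohomology module, so complexes are handled directly.

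\textbf{The inclusion $\SSi(F)\subset\chv(\shm)$.} Your strategy is the paper's: extension of holomorphic solutions across a non-characteristic real hypersurface, via Zerner's use of the refined Cauchy--Kowalevska theorem. However, the paper first reduces to $\shm=\shd_X/\shd_X\cdot P$ by a standard d\'evissage. Locally $\shm$ is a quotient of a finite sum of $\shd_X/\shd_X\cdot P_j$ with $\sigma(P_j)(p)\neq0$, the kernel is again non-characteristic at $p$, and one inducts on the cohomological degree. Only after this reduction is the extension theorem a classical result you can quote. Your step~(2), asserted for an arbitrary coherent $\shm$ ``together with all higher Ext's'', is essentially the inclusion itself. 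As written, you would have to carry out step~(3) in full rather than invoke it.
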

\begin{proof}[Sketch of proof]
(i)  ``$\subset$''. One reduces to the case where $\shm=\shd_X/\shd_X\cdot P$ for a differential operator $P$ and one  proves
that if $U\subset X$ is an open subset with smooth boundary in a neighborhood of $x_0\in\partial U$, non-characteristic for $P$, then any 
holomorphic solution $u$ of $Pu=0$ defined on $U$ extends holomorphically in a neighborhood of $x_0$. 
As remarked by Zerner~\cite{Ze71}, this follows from the \emph{the refined} Cauchy--Kowalevska theorem proved by Petrowsky and implicitly  used by Leray (see~\cite{Le57}). One can find a detailed proof in~\cite{Ho83}*{Th.~9.4.7}.
      
  \spa
      (ii)  ``$\supset$''.  The sheaf $\she^\R_X$ of microlocal operators
      of Sato--Kawai--Kashiwara~\cite{SKK73} is defined by
      \eqn
      \she^\R_X=\mu_\Delta(\sho_{X\times X}^{(0,d_X)})\,[d_X].
      \eneqn
      Here, $\mu_\Delta$ is the Sato's microlocalization functor along the diagonal. 
      The sheaf  $\she^\R_X$ is faithfully flat over the sheaf  $\she_X$ of micro-differential operators and one proves that
      \eqn
      &&\chv(\shm)=\supp(\she_X\tens[\opb{\pi}\shd_X]\opb{\pi}\shm).
      \eneqn
      Therefore 
        \eqn
      \chv(\shm)&=&\supp(\she^\R_X\tens[\opb{\pi}\shd_X]\opb{\pi}\shm)\\
      &=&\supp\mu_\Delta \rhom[\shd_X](\shm,\sho_{X\times X}).
      \eneqn
Finally, $\supp\mu_\Delta F\subset\SSi(F)\cap T^*_\Delta(X\times X)$ by~\cite{KS90}*{Cor.~5.4.10}.
\end{proof}

  \medskip
   As a corollary one gets another proof of the involutivity of $\chv(\shm)$ --
  at the opposite from Gabber's algebraic argument.

\section{Hyperfunctions and the local Cauchy problem}
\subsubsection*{Hyperfunctions}
  From now on, $M$ is a real analytic manifold ($\dim M=n$) and $X$ is a
  complexification of $M$. Define
  \[
    \sha_M\eqdot\sho_X\tens\C_M,\qquad
    \shb_M\eqdot\rhom(\RD'_X\C_M,\sho_X),
  \]
  the sheaves of real analytic functions and of \emph{Sato's
  hyperfunctions}.
  Here,  $\RD'_X\C_M=\rhom(\C_M,\C_X)\simeq\ori_M[-n]$. 
  
  \spa
It is proved by Sato that $\shb_M$ is concentrated in degree $0$. 
Then,  $\shb_M\simeq H^n_M(\sho_X)\tens\ori_M$.\\
  Roughly speaking, up to shift and orientation, $\shb_M=\rsect_M\sho_X$.

  Hyperfunctions may be represented as boundary values of holomorphic functions.  
 The sheaf $\shb_M$ is a flabby sheaf, much bigger than the sheaf of distributions.

\subsubsection*{The hyperbolic characteristic variety}
  \begin{definition}
    For $\shm$ a coherent left $\shd_X$-module, the \emph{hyperbolic
    characteristic variety} of $\shm$ along $M$ is
    \[
      \chv_M(\shm)=T^*M\cap C_{T^*_MX}(\chv(\shm)).
    \]
  \end{definition}
\begin{remark}
In case where $\shm=\shd_X/\shd_X\cdot P$ for a differential operator $P$, what we call ``hyperbolic'' corresponds to what was classically  called ``weakly hyperbolic''. 

Moreover, one sometimes says that $\theta\in T^*M$ is hyperbolic if $\theta$ does not belong to the hyperbolic characteristic variety. 
\end{remark}

  \medskip
  Combining the preceding results, we get
    \begin{theorem}
    Let $F=\rhom[\shd_X](\shm,\shb_M)$. Then
    \[
      \SSi(F)\subset\chv_M(\shm).
    \]
  \end{theorem}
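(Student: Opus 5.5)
The plan is to write $F$ as a local cohomology complex along $M$ of the holomorphic solution complex, and then chain together the two results already available: the estimate $\SSi(\rsect_MF)\subset\SSi_M(F)$ of \cite{KS90}*{Cor.~6.4.4, Prop.~6.2.4} and the equality $\SSi(\rhom[\shd_X](\shm,\sho_X))=\chv(\shm)$ of \cite{KS90}*{Th.~11.3.3}.

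Set $G\eqdot\rhom[\shd_X](\shm,\sho_X)\in\Derb(\C_X)$. By definition $\shb_M=\rhom(\RD'_X\C_M,\sho_X)$ with $\RD'_X\C_M\simeq\ori_M[-n]$, and $\ori_M$ is a locally constant sheaf on $M$ of rank one. Since $\rhom(\C_M,\cdot)\simeq\rsect_M(\cdot)$, we get
\[
\shb_M\simeq\rsect_M(\sho_X)\tens\ori_M[n].
\]
The two functors $\rhom[\shd_X](\shm,\cdot)$ and $\rhom(\RD'_X\C_M,\cdot)$ commute; this is the usual adjunction, since both are internal Hom's, one over $\shd_X$ and one over $\C_X$. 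Hence
\[
F\simeq\rhom(\RD'_X\C_M,G)\simeq\rsect_M(G)\tens\ori_M[n].
\]
To check that $\rsect_M$ commutes with $\rhom[\shd_X](\shm,\cdot)$ one may argue locally, using a finite free resolution of $\shm$ as in \S\ref{section:D}. Then $G$ is represented by a complex of sheaves $\sho_X^{N_j}$, and $\rsect_M$ is applied termwise.

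Tensoring with a rank-one local system and shifting do not change the micro-support, because the question is local on $M$, where $\ori_M$ is trivial. Therefore $\SSi(F)=\SSi(\rsect_MG)$. Viewing $F$ as a sheaf on $M$ or on $X$ is harmless, because $\rsect_MG$ is supported on $M$; the statement is understood via the embedding $T^*M\hookrightarrow T_{T^*_MX}T^*X$ of \eqref{eq:embed} used in the definition of $\SSi_M$. The theorem $\SSi(\rsect_MG)\subset\SSi_M(G)$ then gives
\[
\SSi(F)\subset\SSi_M(G)=T^*M\cap C_{T^*_MX}(\SSi(G)).
\]
Substituting $\SSi(G)=\chv(\shm)$ from \cite{KS90}*{Th.~11.3.3} yields $T^*M\cap C_{T^*_MX}(\chv(\shm))=\chv_M(\shm)$, which is the claim.

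The main obstacle is the bookkeeping in the first step rather than any analysis. There are two points to settle. The first is the exact identification of $\rhom(\RD'_X\C_M,G)$ with $\rsect_MG$ up to the orientation twist and shift. The second is the precise meaning of $\SSi(F)$ as a subset of $T^*M$, as opposed to the micro-support of the sheaf $\rsect_MG$ on $X$ inside $T^*X$. I would handle the second point exactly as in the statement of \cite{KS90}*{Cor.~6.4.4}, where $\SSi(\rsect_MG)$ is already interpreted through \eqref{eq:embed}. The remaining ingredients are direct citations of earlier results.
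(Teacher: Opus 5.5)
Your proposal is correct and follows the paper's own route. The paper obtains the theorem by ``combining the preceding results'': it writes $F\simeq\rsect_M\rhom[\shd_X](\shm,\sho_X)\tens\ori_M[n]$, applies $\SSi(\rsect_MG)\subset\SSi_M(G)$, and substitutes $\SSi(G)=\chv(\shm)$. Your handling of the orientation twist and shift, and of $\rsect_MG$ as a sheaf on $M$, is exactly the bookkeeping this requires.
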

  Hyperfunction solutions of a $\shd$-module \emph{propagate} in hyperbolic codirections, that is, outside the hyperbolic characteristic variety.
  
   \begin{corollary}\label{cor:locahyperCP}
    Let $N\subset M$ be a closed analytic submanifold, $Y$ a
    complexification of $N$ in $X$, $f\cl Y\into X$. Assume the
    \emph{hyperbolicity condition}
    \[
      T^*_NM\cap\chv_M(\shm)\subset T^*_MM.
    \]
    Then
    \[
      \rhom[\shd_X](\shm,\shb_M)\vert_N\isoto
      \rhom[\shd_Y](\shm_Y,\shb_N).
    \]
  \end{corollary}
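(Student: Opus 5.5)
The plan is to set $F\eqdot\rhom[\shd_X](\shm,\sho_X)$ and write the hyperfunction solutions as $\rhom[\shd_X](\shm,\shb_M)\simeq \rsect_M F\tens\ori_M[n]$. This uses $\shb_M\simeq\rsect_M(\sho_X)\tens\ori_M[n]$ and the commutation of $\rhom[\shd_X](\shm,\cdot)$ with $\rsect_M$, which holds because $\shm$ is coherent and locally has a finite free resolution. Similarly, on $N$ (with $d=\dim N$) one has $\rhom[\shd_Y](\shm_Y,\shb_N)\simeq\rsect_N G\tens\ori_N[d]$, where $G\eqdot\rhom[\shd_Y](\shm_Y,\sho_Y)$.

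The argument then runs through the following chain of isomorphisms, each to be justified separately:
\[
\rsect_MF\vert_N\tens\ori_M[n]\;\simeq\;\epb{g}\rsect_MF\tens\ori_N[d]\;\simeq\;\rsect_N(F\vert_Y)\tens\ori_N[d]\;\simeq\;\rsect_N G\tens\ori_N[d].
\]
Here $g\cl N\into M$ is the inclusion.

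\begin{enumerate}[(a)]
\item For the first isomorphism, apply Theorem~\ref{th:functor1}(b) to $g$ and the object $\rsect_MF\in\Derb(\C_M)$. By the theorem on $\SSi(\rsect_M F)$ combined with $\SSi(F)=\chv(\shm)$, we have $\SSi(\rsect_MF)\subset\SSi_M(F)=\chv_M(\shm)$. The hyperbolicity hypothesis $T^*_NM\cap\chv_M(\shm)\subset T^*_MM$ says precisely that $g$ is non-characteristic for this set. Hence $\opb{g}\rsect_MF\tens\omega_{N/M}\isoto\epb{g}\rsect_MF$, and $\omega_{N/M}\simeq\ori_N\tens\ori_M^{\otimes-1}[d-n]$ accounts for the shifts and orientations.
\item For the second isomorphism, use the general identity $\epb{g}\rsect_M\simeq\rsect_N\epb{f}$ on $X$: both sides equal $\epb{(N\to X)}$. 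This gives $\epb{g}\rsect_MF\simeq\rsect_N\epb{f}F$.
\item For the last isomorphism, we need $\epb{f}F\simeq\opb{f}F\tens\omega_{Y/X}$ and $\opb{f}F\simeq G$, both by Kashiwara's theorem and Theorem~\ref{th:functor1}(b). This requires $Y$ to be non-characteristic for $\shm$, at least near $N$.
\end{enumerate}

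The main obstacle is proving this complex non-characteristic property near $N$ from the real hyperbolicity condition. The tool is the Whitney normal cone. Suppose, for contradiction, that there are points $(y_k;\zeta_k)\in\chv(\shm)$ with $y_k\to x_0\in N$, $|\zeta_k|=1$, and $\zeta_k$ conormal to $Y$. Write $\zeta_k$ in coordinates adapted to $M\subset X$, splitting into real and imaginary parts. Rescaling the imaginary part (which is transverse to $T^*_MX$) then produces a nonzero vector of $C_{T^*_MX}(\chv(\shm))\cap T^*M$ lying in $T^*_NM$, which hyperbolicity forbids.

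The real cotangent vector should come from the real or imaginary part of the complex conormal: $T^*_YX$ restricted over $N$ contains $T^*_NM$ and $\sqrt{-1}\,T^*_NM$. The $\C^\times$-conicity of $\chv(\shm)$ lets one rotate $\zeta$ so that its ``real'' part dominates. I expect the actual proof to carry out this cone computation in local coordinates, namely the standard fact that hyperbolicity implies $\opb{f_\pi}\chv(\shm)\cap T^*_YX\subset T^*_XX$ near $N$.

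Finally, the orientation twists $\omega_{Y/X}$ (which is trivial, as the relative complex dimension gives an even shift absorbed with orientations) and $\omega_{N/M}$ must match. This is checked by the routine bookkeeping above.
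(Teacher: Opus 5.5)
Your chain is step for step the paper's: compare $\opb{g}$ and $\epb{g}$ on $N\subset M$ using $\SSi(\rsect_MF)\subset\chv_M(\shm)$ and the hyperbolicity hypothesis, then use $\rsect_N\rsect_M\simeq\rsect_N\rsect_Y$ (as $N=M\cap Y$), then non-characteristicity of $Y$ and Kashiwara's theorem. Two things need fixing.

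First, the shifts in (a) and (c) are both wrong, and the two errors cancel. Theorem~\ref{th:functor1}~(b) gives $\opb{g}H\simeq\epb{g}H\otimes\omega_{N/M}^{\otimes-1}$. With your $d=\dim N$ one has $\omega_{N/M}^{\otimes-1}\simeq\ori_N\otimes\ori_M[n-d]$. So (a) really reads
\[
\rsect_MF\vert_N\otimes\ori_M[n]\simeq\epb{g}\rsect_MF\otimes\ori_N[2n-d]
\]
(you used the twist in the wrong direction). Also, $\omega_{Y/X}$ is not trivial. $X$ and $Y$ are canonically oriented, but $\omega_{Y/X}\simeq\C_Y[-2(n-d)]$, so $\epb{f}F\simeq F\vert_Y[-2(n-d)]$. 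Since $(2n-d)-2(n-d)=d$, your final $\rsect_NG\otimes\ori_N[d]$ is correct. It matches the paper's shift $[n+d]$ followed by $[n-d]$, where the paper's $d$ is the codimension.

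Second, you rightly flag that $Y$ must be non-characteristic near $N$; the paper uses this without comment. It needs no sequence or dominance argument. Let $x_0\in N$ and $0\neq\zeta_0\in\chv(\shm)\cap(T^*_YX)_{x_0}$. Since $(T^*_YX)_{x_0}=\C\otimes_\R(T^*_NM)_{x_0}$ and $\chv(\shm)$ is $\C^\times$-conic, you may multiply $\zeta_0$ by some $e^{\sqrt{-1}\phi}$ and assume $\xi_0\eqdot\operatorname{Re}\zeta_0\neq0$, with $\xi_0\in(T^*_NM)_{x_0}$. With the paper's convention $T^*_MX=\{y=\xi=0\}$, it is the real part that is transverse to $T^*_MX$, not the imaginary part as you wrote. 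The points $(x_0;\epsilon\zeta_0)\in\chv(\shm)$, $\epsilon\to0^+$, have normal component $(y,\xi)=(0,\epsilon\xi_0)$. Rescaling by $\epsilon^{-1}$ gives $(x_0;\xi_0)\in C_{T^*_MX}(\chv(\shm))\cap T^*M=\chv_M(\shm)$, contradicting $T^*_NM\cap\chv_M(\shm)\subset T^*_MM$. Hence $Y$ is non-characteristic at every point of $N$. Since $\chv(\shm)$ is closed and conic, this holds on a neighborhood of $N$ in $Y$. The statement is local on $N$, so you may shrink $X$ so that it holds on all of $Y$.
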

  In other words, the Cauchy problem for hyperfunctions is well-posed for non-hyperbolic
  characteristic submanifolds. 

 \begin{proof}
  Set $n=\dim M$, $d=n-\dim N$. The proof reduces to a chain of isomorphisms:
  \eqn
  \rhom[\shd_X](\shm,\shb_M)\vert_N
    &\simeq&\rsect_N\rhom[\shd_X](\shm,\shb_M)\tens\ori_{N/M}[d]\\
    &\simeq&\rsect_N\rhom[\shd_X](\shm,\rsect_M\sho_X)\tens\ori_{N}[n+d]\\
    &\simeq&\rsect_N\rhom[\shd_X](\shm,\rsect_Y\sho_X)\tens\ori_{N}[n+d]\\
    &\simeq&\rsect_N\rhom[\shd_X](\shm,\sho_X)\vert_Y\tens\ori_{N}[n-d]\\
    &\simeq&\rsect_N\rhom[\shd_Y](\shm_Y,\sho_Y)\tens\ori_{N}[n-d]\\
    &\simeq&\rhom[\shd_Y](\shm_Y,\shb_N).
  \eneqn
  Here, the first isomorphism follows from the fact that $N$ is non-characteristic for  
  $\chv_M(\shm)$, the second from the definition of $\shb_M$, the third from the fact that 
  $N=M\cap Y$, the fourth from the fact that $Y$ is non-characteristic for  $\chv(\shm)$, the fifth from the Cauchy-Kowalevska-Kashiwara theorem and the las one from the definition  of 
  $\shb_N$. 
\end{proof}

\section{Causal manifolds and the $\lambda$-topology}

\subsubsection*{Causal manifolds}
  In a Lorentzian spacetime, the quadratic form gives  a \emph{light cone} in the tangent space. For our purposes we don't need the metric:
  only the cone, and we prefer to work in the cotangent space.

  \begin{definition}\label{def:causalmanif}
  \begin{itemize}
 \item[\rm(i)] 
 A \emph{causal manifold} $(M,\lambda)$ is a manifold $M$ together with $\lambda$, a closed convex proper cone of $T^*M$
 satisfying $T^*_MM\subset\lambda$.
\item[\rm(ii)] 
A morphism $f\cl(M,\lambda_M)\to(N,\lambda_N)$ is \emph{causal} if $f_d\opb{f_\pi}\lambda_N\subset\lambda_M$.
 \item[\rm(iii)]  
$(\R,+)$ denotes the causal manifold $\R$ with $\Lambda_\R=\{(t;\tau)\,;\,\tau\geq0\}$.
\end{itemize}
  \end{definition}
Observe that the cone $\lambda$ satisfies
\eq\label{eq:good}
&&\lambda\cap\lambda^a= T^*_MM, \quad \lambda+\lambda=\lambda.
\eneq
Here $\lambda^a=-\lambda$ is the opposite cone. 
  \begin{example}
    Let $\BBV$ be a finite-dimensional real vector space, $\theta$ a closed
    proper convex cone with non-empty interior, $V\subset\BBV$ open convex,
    $\lambda=V\times\theta^{\circ a}$. Then $(V,\lambda)$ is a causal
    manifold.
  \end{example}

\subsubsection*{The $\lambda$-topology}
  \begin{definition}
 \begin{itemize}
 \item[\rm(a)] 
 A locally closed set $A\subset M$ is a \emph{$\lambda$-set} if $\SSi(\cor_A)\subset\lambda$.
 \item[\rm(b)] 
  A \emph{$\lambda$-open} (resp.\ \emph{$\lambda$-closed}) set is an open (resp.\ closed) $\lambda$-set.
   \end{itemize}
  \end{definition}

  \begin{proposition}\label{pro:lambdatop}
    The family of $\lambda$-open sets defines a topology on $M$, called the $\lambda$-topology,  the same
    one given by the $\lambda$-closed sets.
  \end{proposition}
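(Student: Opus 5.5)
Three things need to be checked. First, the $\lambda$-open sets are stable under finite intersections. Second, they are stable under arbitrary unions. Third, a set $U$ is $\lambda$-open if and only if $M\setminus U$ is $\lambda$-closed. The empty set and $M$ are clearly $\lambda$-open, since $\SSi(\cor_M)=T^*_MM\subset\lambda$.

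I would start with the complement statement, using the exact sequence $0\to\cor_U\to\cor_M\to\cor_Z\to0$ with $Z=M\setminus U$. The triangular inequality gives $\SSi(\cor_U)\subset T^*_MM\cup\SSi(\cor_Z)$ and conversely. Since $T^*_MM\subset\lambda$, it follows that $\SSi(\cor_U)\subset\lambda$ if and only if $\SSi(\cor_Z)\subset\lambda$. This already shows that the two families of sets define the same topology, once we know that one of them is a topology.

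For finite intersections of open sets, I would instead pass to complements and use finite unions of closed sets. For closed $Z_1,Z_2$ there is the exact sequence
\[
0\to\cor_{Z_1\cup Z_2}\to\cor_{Z_1}\oplus\cor_{Z_2}\to\cor_{Z_1\cap Z_2}\to0.
\]
This route bounds $\SSi(\cor_{Z_1\cup Z_2})$ by the $\SSi$ of the intersection, which is the wrong direction. The cleaner route is $\cor_{U_1\cap U_2}\simeq\cor_{U_1}\tens\cor_{U_2}$ together with the micro-support estimate for tensor products, namely $\SSi(F\tens G)\subset\SSi(F)+\SSi(G)$ when $\SSi(F)\cap\SSi(G)^a\subset T^*_MM$. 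This estimate is in \cite{KS90} (Prop.~5.4.14); it is not stated in the text above, but it follows from Theorem~\ref{th:functor1}(b) applied to the diagonal embedding $M\to M\times M$ and to $\cor_{U_1}\boxtimes\cor_{U_2}$. Both of its hypotheses hold here by \eqref{eq:good}: $\lambda\cap\lambda^a=T^*_MM$ gives non-characteristicity, and $\lambda+\lambda=\lambda$ places the result in $\lambda$. Hence $U_1\cap U_2$ is $\lambda$-open.

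The main obstacle is arbitrary unions, because the micro-support does not behave well under infinite limits in general. Let $U=\bigcup_iU_i$. By the finite case we may replace the family by its finite unions, which are $\lambda$-open: indeed $U_1\cup U_2$ is the complement of $Z_1\cap Z_2$, and $\cor_{Z_1\cap Z_2}=\cor_{Z_1}\tens\cor_{Z_2}$ is handled by the same tensor argument. So we may assume the family is filtrant, and then $\cor_U\simeq\varinjlim_i\cor_{U_i}$. I would test the definition of the micro-support directly on $F=\cor_U$. Take $p=(x_0;d\phi(x_0))\notin\lambda$. Since $\lambda$ is a closed cone, there is a neighborhood $W$ of $p$ with $W\cap\lambda=\emptyset$, and hence $W\cap\SSi(\cor_{U_i})=\emptyset$ for all $i$. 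The propagation statement (the isomorphism $R\Gamma(U'\cup V;F)\isoto R\Gamma(U';F)$) then holds for each $\cor_{U_i}$, with a neighborhood $V$ that depends only on $W$, $\phi$ and $x_0$ and can be chosen uniformly via \cite{KS90}*{Prop.~5.1.1}. Local cohomology with support in a closed set, computed on small relatively compact open balls, commutes with filtrant inductive limits of sheaves on a locally compact space. Therefore $(\rsect_{\{\phi\ge\phi(x_0)\}}\cor_U)_{x_0}\simeq\varinjlim_i(\rsect_{\{\phi\ge\phi(x_0)\}}\cor_{U_i})_{x_0}=0$, and the same holds at nearby points of $W$, so $W\cap\SSi(\cor_U)=\emptyset$. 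The delicate point I expect is justifying this commutation of the stalk of local cohomology with $\varinjlim$, which holds by finiteness of cohomological dimension and compactness. Alternatively, one can use the characterization of $\SSi$ via open half-space-type sets as in \cite{KS90}*{Ex.~V.7}.
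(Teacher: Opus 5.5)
Your treatment of complements, finite intersections and finite unions is correct. Finite intersections go exactly as in the paper, via $\cor_{U_1\cap U_2}\simeq\cor_{U_1}\tens\cor_{U_2}$; your derivation from the diagonal restriction also needs $\SSi(F\boxtimes G)\subset\SSi(F)\times\SSi(G)$. For finite unions, passing through $\cor_{Z_1\cap Z_2}\simeq\cor_{Z_1}\tens\cor_{Z_2}$ and complements is a valid substitute for the paper's Mayer--Vietoris triangle. The reduction of arbitrary unions to a filtrant family is also the paper's.

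The gap is in the limit step. You claim that $(\rsect_{\{\phi\geq\phi(x_0)\}}\,\cdot\,)_{x_0}$ commutes with filtrant inductive limits, and that this follows from finite cohomological dimension and compactness. This is false in general. Up to the stalk $F_{x_0}$, which does commute, this local cohomology is $\varinjlim_B\rsect(B\setminus Z;\cdot)$, with $B\setminus Z$ open. Sections over open sets, even relatively compact ones, do not commute with filtrant limits; only $\rsect(K;\cdot)$ for $K$ compact, or $\rsect_c$, does.

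Here is a concrete counterexample. Take $M=\R$, $U_i=(-\infty,-1/i)$, $\phi(x)=x$ and $x_0=0$. Each $\cor_{U_i}$ vanishes near $0$, so $(\rsect_{\{x\geq0\}}\cor_{U_i})_0=0$. Yet $(\rsect_{\{x\geq0\}}\cor_{(-\infty,0)})_0\simeq\cor[-1]$. In this example $(0;1)$ lies in the closure of $\bigcup_i\SSi(\cor_{U_i})$, which is exactly the situation your uniform neighbourhood $W$ excludes. However, your argument never uses $W$ at the point where the limit is taken.

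To close the gap, you must use $W\cap\SSi(\cor_{U_i})=\emptyset$ for all $i$ as follows:
\begin{enumerate}
\item Rewrite the vanishing condition, uniformly in $i$, as isomorphisms between cohomologies on a fixed family of compact sets (truncated cones), depending only on $W$, via the non-characteristic deformation lemma.
\item Pass to the inductive limit there, which is now legitimate because the sets are compact.
\item Use the converse direction of that characterization to conclude $W\cap\SSi(\cor_U)=\emptyset$.
\end{enumerate}
This is precisely the content of \cite{KS90}*{Exe.~5.7}, which the paper invokes directly. So the ``alternative'' you mention at the end is the paper's proof, and your main route is incomplete without it.
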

\begin{proof}
Recall that an open set $U$ is $\lambda$-open if and only if $M\setminus U$ is $\lambda$-closed.

\spa
(i) If $U_1$ and $U_2$ are $\lambda$-open, so is $U_1\cap U_2$ since $\cor_{U_1\cap U_2}\simeq\cor_{U_1}\tens\cor_{U_2}$ and 
$\SSi(\cor_{U_1}\tens\cor_{U_2})\subset\lambda$ by~\eqref{eq:good}.

\spa
(ii) If $U_1$ and $U_2$ are $\lambda$-open, so is $U_1\cup U_2$. This follows from (i) and the distinguished triangle
$\cor_{U_1\cap U_2}\to \cor_{U_1}\oplus\cor_{U_2}\to\cor_{U_1\cup U_2}\to[+1]$. 

\spa
(iii) Let $\{U_i\}_{i\in I}$ be a  family of $\lambda$-open subsets. Let us order $I$ by the relation $i\leq j$ if $U_i\subset U_j$. We may assume that $I$ is non-empty and by (ii) we may assume that $(I,\leq)$ is directed. It then follows from~\cite{KS90}*{Exe.~5.7} that, setting 
$U=\bigcup_{i\in I}U_i$, $\SSi(\cor_U)\subset \lambda$.

\spa
(iv) Since $\SSi(\cor_M)= T^*_MM\subset\lambda$ and $\SSi(\cor_\varnothing)=\varnothing$, both $M$ and $\varnothing$ are $\lambda$-open and 
the proof is complete. 
\end{proof}

\begin{definition}
One denotes by $M_\lambda$ the space $M$ endowed with the $\lambdaa$-topology defined in Proposition~\ref{pro:lambdatop} and by $\rho_\lambda\cl M\to M_\lambda$ the continuous map associated with the identity of the set $M$. 
\end{definition}

\begin{example}\label{exa:lambdaopen} 
\banum
\item
If $M=\BBV$ is a vector space and $\lambdaa=\BBV\times\gamma^{\circ}$ for a closed convex proper cone with non-empty interior $\gamma\subset\BBV$, then an open subset $U\subset\BBV$ is $\lambdaa$-open if and only if $U=U+\gamma$.
\item
If $\lambda=T^*_MM$, the only $\lambda$-open subsets are the connected components of $M$. 
\item
If $\lambda=T^*M$ \lp in which case, $\lambda$ is not proper\rp, the $\lambda$-topology is the usual one.
\eanum
\end{example}

\begin{remark}
In~\cite{Sc26}, one proves that, under suitable hypotheses, the functor\\
$\roim{\rhol}\cl\Derb_{\lambda}(\cor_M)\to\Derb(\cor_{M_\lambdaa})$ is an equivalence of categories with quasi-inverse $\opb{\rhol}$, generalizing a result of~\cite{KS90} which treated the case of Example~\ref{exa:lambdaopen}~(i).
\end{remark}

\subsubsection*{Futures and preorders}

\begin{definition}
Let $(M,\lambda)$ be a causal manifold.
\banum
\item
  For $A\subset M$, the \emph{future} of $A$, denoted $\futl{A}$, is the
  closure of $A$ for the $\lambda$-topology. 
  \item
  The \emph{causal preorder} on $(M,\lambda)$, denoted $\preceql $ is defined by 
    \[
    x\preceql y\quad\iff\quad y\in\futl{x}.
  \]
  \item
  A diamond in $M$ is a set $\futl{K}\cap\futla{L}$ for $K$ and $L$ compact in $M$.
  \eanum
\end{definition}
Note that the order on $(\R,+)$ (see Definition~\ref{def:causalmanif}~(iii)) is the usual order.
Indeed, the intervals $[a,+\infty)$ are $\lambda$-closed for $\lambda=\{(t;\tau);\tau\geq0\}$. 

  \medskip
  Classically one considers the ps-preorder: $x\preceqps y$ if there exists
  a piecewise smooth causal path $c\cl I\to M$ with $c(0)=x$, $c(1)=y$.
  Then, denoting by $J^+_{\rm ps}(A)$ the future set of $A$ for the
  ps-preorder, one can show that $\futla{A}$ is the closure of $J^+_{\rm ps}(A)$.

\begin{example}\label{exa:jub}
Let  $(x_1,x_2)$ be the coordinates on $\R^2$, $Z=\{(x_1,x_2);x_1\leq0, x_2=0\}$, $M=\R^2\setminus Z$. We identify $\R^2$ and $(\R^2)^*$ by using the Euclidian structure of $\R^2$. Set
\eqn
&&\theta=\{(x_1,x_2)\in \R^2;x_2\geq\vert x_1\vert\},\quad \lambda= M\times\theta.
\eneqn
Let $x_0=(-1,-1)$. One gets
\eqn
&&\futl{x_0}=(x_0+\theta)\cap  \{x_2<0\}.
\eneqn
Let $x_1=(1,1)$. One gets
\eqn
&&\futla{x_1}= (x_1+\theta^a)\cap M.
\eneqn
Note that $x_0\in \futla{x_1}$ but $x_1\notin \futl{x_0}$. 
\end{example}

\begin{proposition}\label{pro:new}
Let $f\cl (M,\lambda_M)\to(N,\lambda_N)$ be a morphism of causal manifolds. Assume that 
$f$ is non-characteristic with respect to $\lambda_N$, that is:
\eq\label{eq:noncar1}
&&\opb{f_\pi}(\lambda_N)\cap\opb{f_d}(T^*_MM)\subset M\times_NT^*_NN.
\eneq
Then  $f$ induces a continuous map $M_{\lambda_M}\to N_{\lambda_N}$.
\end{proposition}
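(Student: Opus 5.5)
To prove continuity of $f\cl M_{\lambda_M}\to N_{\lambda_N}$, I will show that for every $\lambda_N$-open subset $V\subset N$, the set $U\eqdot\opb{f}(V)$ is $\lambda_M$-open. Since $f$ is continuous for the usual topologies, $U$ is open in $M$. It therefore remains to check that $\SSi(\cor_U)\subset\lambda_M$.

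The key identity is $\cor_U\simeq\opb{f}\cor_V$, which holds because inverse image commutes with extension by zero from an open subset. I then apply Theorem~\ref{th:functor1}~(b) with $G=\cor_V$. This requires $f$ to be non-characteristic for $\SSi(\cor_V)$. By assumption $\SSi(\cor_V)\subset\lambda_N$, and the non-characteristic condition of Definition~\ref{def:noncar} is monotone in the cone: if $\Lambda\subset\Lambda'$, then $\opb{f_\pi}\Lambda\cap\opb{f_d}T^*_MM\subset\opb{f_\pi}\Lambda'\cap\opb{f_d}T^*_MM$. Hypothesis~\eqref{eq:noncar1} therefore gives the non-characteristic condition for $\SSi(\cor_V)$. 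Theorem~\ref{th:functor1}~(b) then yields
\[
\SSi(\cor_U)=\SSi(\opb{f}\cor_V)\subset f_d\,\opb{f_\pi}\,\SSi(\cor_V)\subset f_d\,\opb{f_\pi}\,\lambda_N\subset\lambda_M,
\]
where the last inclusion is exactly the causality of $f$ (Definition~\ref{def:causalmanif}~(ii)). Hence $U$ is $\lambda_M$-open, which proves continuity.

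The main obstacle is only a subtle point in applying Theorem~\ref{th:functor1}~(b). That theorem requires $\SSi(G)$ to be a closed conic set, which it always is, and the non-characteristic condition for $\SSi(\cor_V)$ itself. The monotonicity step above covers the latter. If a properness formulation is preferred, $f_d$ proper on $\opb{f_\pi}\lambda_N$ implies properness on the closed subset $\opb{f_\pi}\SSi(\cor_V)$, which gives the same conclusion.

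Everything else is formal. The same argument with $\lambda$-closed sets would work equally well, because $\opb{f}$ of a closed set gives $\cor_{\opb{f}(Z)}\simeq\opb{f}\cor_Z$.
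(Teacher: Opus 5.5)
Your proof is correct and is essentially the paper's argument. The paper uses the non-characteristic hypothesis and Theorem~\ref{th:functor1}~(b) to show that $f^{-1}$ sends sheaves with micro-support in $\lambda_N$ to sheaves with micro-support in $\lambda_M$. It then applies this to the constant sheaf on a $\lambda_N$-open $V$, whose inverse image is the constant sheaf on $f^{-1}(V)$. You additionally spell out the two steps the paper leaves implicit: monotonicity of the non-characteristic condition in the cone, and the causality inclusion $f_d f_\pi^{-1}\lambda_N\subset\lambda_M$.
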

\begin{proof}
It follows from the hypothesis and~\cite{KS90}*{Prop.~5.4.13} or Theorem~\ref{th:functor1}~(ii) that the inverse image  functor $\opb{f}$ induces a functor
\eq\label{eq:noncar2}
&& \opb{f}\cl\Derb_{\lambda_N}(\cor_{N})\to \Derb_{\lambda_M}(\cor_{M}).
\eneq
Now let $V$ be a $\lambda_N$-open subset of $N$, that is, $V$ is open and $\SSi(\cor_V)\subset \lambda_N$.
Then $\SSi(\opb{f}\cor_V)\subset\lambda_M$ by \eqref{eq:noncar2} and the result follows since 
$\opb{f}\cor_V\simeq\cor_{\opb{f}V}$.
\end{proof}

\medskip
Note that $f$ is non-characteristic with respect to $\lambda_N$ if and only if  $f$ is non-characteristic with respect to $\lambda^a_N$.

\begin{corollary}\label{cor:morpreord}
Let $f\cl (M,\lambda_M)\to(N,\lambda_N)$ be a morphism of causal manifolds and assume that 
$f$ is non-characteristic with respect to $\lambda_N$. Then $f$ preserves the causal preorders, that is,\
\eqn
&&    x\preceqlm y\mbox{ implies }    f(x)\preceqln f(y).
\eneqn
\end{corollary}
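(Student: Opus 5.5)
The plan is to deduce this directly from Proposition~\ref{pro:new}, using only two general facts: continuous maps send closures into closures, and the causal preorder is defined through closures in the $\lambda$-topology. By Proposition~\ref{pro:new}, the non-characteristic hypothesis~\eqref{eq:noncar1} guarantees that the set map $f$ induces a continuous map $f\cl M_{\lambda_M}\to N_{\lambda_N}$.

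The key step is the elementary topological inclusion $f(\overline{A})\subset\overline{f(A)}$, valid for any continuous map and any subset $A$. Applied with $A=\{x\}$, where closures are taken for the $\lambda_M$- and $\lambda_N$-topologies, it gives
\[
f\bigl(\futl{x}\bigr)\subset\overline{\{f(x)\}}^{\lambda_N},
\]
and the right-hand side is by definition the future of $f(x)$ in $(N,\lambda_N)$. To check the inclusion concretely, let $y\in\futl{x}$ and let $V$ be any $\lambda_N$-open neighbourhood of $f(y)$. Then $\opb{f}V$ is $\lambda_M$-open by Proposition~\ref{pro:new}, and it contains $y$. Since $y$ lies in the $\lambda_M$-closure of $\{x\}$, this forces $x\in\opb{f}V$, that is, $f(x)\in V$. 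Hence $f(y)$ lies in the $\lambda_N$-closure of $\{f(x)\}$.

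Unwinding the definition, $x\preceqlm y$ means $y\in\futl{x}$. The inclusion above then yields $f(y)\in\overline{\{f(x)\}}^{\lambda_N}$, which is exactly the statement $f(x)\preceqln f(y)$.

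There is no real obstacle, since all the microlocal content is already contained in Proposition~\ref{pro:new}. The only point to watch is orientation: the future is defined as the closure for the $\lambda$-topology itself, not for the $\lambda^a$-topology, and this matches the direction of the preorder in its definition. For the analogous statement with pasts, one uses the remark that $f$ is non-characteristic for $\lambda_N$ if and only if it is non-characteristic for $\lambda_N^a$, so Proposition~\ref{pro:new} applies equally to the opposite cones.
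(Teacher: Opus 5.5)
Your proof is correct and follows the paper's argument: continuity of $f\colon M_{\lambda_M}\to N_{\lambda_N}$ comes from Proposition~\ref{pro:new}, and is combined with the inclusion $f(\overline{A})\subset\overline{f(A)}$ for $A=\{x\}$. Your explicit neighbourhood check of that inclusion just spells out the same step.
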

\begin{proof}
 If $f\cl X\to Y$ is continuous and $A\subset X$, then $f(\ol A)\subset \ol{f(A)}$.  Therefore, $y\in\futlm{x}$ implies $f(y)\in \futln{f(x)} $.
\end{proof}

\section{Time functions and the global Cauchy problem  }

\subsubsection*{Time functions}
  \begin{definition}
  \banum
  \item
A \emph{time function} \lp also called a \emph{Cauchy time function}\rp\, on $(M,\lambda)$ is a submersive causal morphism $\tim\cl(M,\lambda)\to(\R,+)$, proper on
 $\futl{K}$ and $\futla{K}$ for every compact $K\subset M$.
\item
 A \emph{G-causal manifold} $(M,\lambda,\tim)$ is a causal manifold endowed with a time function.
  \eanum
    \end{definition}
\begin{itemize}
    \item
Since $\tim$ is submersive, it follows from Corollary~\ref{cor:morpreord} that $\tim$ preserves the causal preorder: if $x\preceql y$, then 
$\tim(x)\leq \tim(y)$. 
\item
Since $\tim$ is causal, $d\tim(x)\in\lambda$, for any $x\in M$. 
\end{itemize}

\begin{proposition}
Let $(M,\lambda,\tim)$ be a G-causal manifold. Then diamonds are compact.
\end{proposition}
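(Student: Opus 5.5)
The diamond $D=\futl{K}\cap\futla{L}$, with $K,L$ compact, is closed in the usual topology. Indeed, $\futl{K}$ is $\lambda$-closed, hence closed for the $\lambda$-topology. Since $\rho_\lambda\cl M\to M_\lambda$ is continuous (every $\lambda$-open set is open), $\lambda$-closed sets are closed in $M$. The same holds for $\futla{L}$, so $D$ is closed. The plan is to show that $D$ lies in a compact set, namely $\futl{K}\cap\tim^{-1}([a,b])$ for suitable reals $a\le b$.

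The key step is to bound $\tim$ on $D$. By Corollary~\ref{cor:morpreord}, $\tim$ preserves the preorders. Concretely, $\tim$ is continuous from $M_\lambda$ to $\R_{+}$, where closed sets of $\R$ for its causal topology are the sets $[c,+\infty)$ and their limits. Let $a=\min_K\tim$, which exists since $K$ is compact. The set $\tim^{-1}([a,+\infty))$ is $\lambda$-closed, as the inverse image of a closed set under a continuous map $M_\lambda\to\R_{\lambda_\R}$, and it contains $K$. Hence it contains $\futl{K}$, so $\tim\ge a$ on $\futl{K}$.

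Symmetrically, $\tim$ is causal for $\lambda^a$ into $(\R,-)$, and the non-characteristic condition holds for $\lambda^a$ as noted after Proposition~\ref{pro:new}. With $b=\max_L\tim$, this gives $\tim\le b$ on $\futla{L}$. Therefore $D\subset\futl{K}\cap\tim^{-1}([a,b])$.

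By the definition of a time function, $\tim$ is proper on $\futl{K}$. So $\futl{K}\cap\tim^{-1}([a,b])$ is compact: it is the preimage of a compact set under a proper map on a closed subset. Then $D$, being closed inside this compact set, is compact.

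The main obstacle is a subtle point in the middle step. Properness is only assumed on $\futl{K}$, so one really needs both the lower bound there and the upper bound coming from $L$; the estimate must not rely on an argument that only works near $K$. One also has to check that the hypothesis of Proposition~\ref{pro:new} holds, i.e.\ that $\tim$ is non-characteristic for $\lambda_\R$. This follows because $\tim$ is submersive: $d\tim(x)\neq0$, so $f_d$ is injective and only the zero covector pulls back to the zero section.
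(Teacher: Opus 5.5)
Your proof is correct and follows the paper's argument. You bound the time function from below on the future of $K$ and from above on the past of $L$, using its continuity for the $\lambda$-topologies (Proposition~\ref{pro:new}, Corollary~\ref{cor:morpreord}), and conclude by properness on the future of $K$, while also spelling out two points the paper leaves implicit (that the diamond is closed, and that submersivity gives the non-characteristic hypothesis).
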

\begin{proof}
Let $K$ and $L$ be two compact subsets of $M$. It follows from Corollary~\ref{cor:morpreord} that $\futl{K}$ is contained in 
$\opb{\tim}([t_0,+\infty))$ for some $t_0$ and similarly, $\futla{L}$ is contained in $\opb{\tim}((-\infty,t_1])$ for some $t_1$. Then 
 $\futl{K}\cap\futla{L}$ is contained in $\opb{\tim}([t_0,t_1])$. Since $\tim$ is proper on  $\futl{K}$, the result follows.
\end{proof}

    \bigskip
 Classically,  a space time is  globally hyperbolic if diamonds are compact and there are no causal curves. However, 
the causal order used   for this definition is not the same as the one we have used, using the $\lambda$-topology. 
This point is discussed in~\cite{Sc26}, based on~\cite{JS16}.  
    
  Theorems of Geroch, Bernal--S\'anchez, Fathi--Siconolfi, etc. assert that 
  a globally hyperbolic Lorentzian spacetime admits a Cauchy time function.

\subsubsection*{A key propagation lemma}
  \begin{lemma}\label{le:abstglobCP}
    Let $(M,\lambda,\tim)$ be G-causal, $F\in\Derb(\cor_M)$, and assume
    \[
      \SSi(F)\cap\lambda\subset T^*_MM.
    \]
    Then $\SSi(\roim{\tim}F)\subset\{(t;\tau)\in T^*\R\,;\,\tau\leq0\}$.
  \end{lemma}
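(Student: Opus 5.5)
The plan is to reduce the claimed bound to a vanishing statement for sections over slabs of $M$, and then deduce that vanishing from the disjointness of $\SSi(F)$ and $\lambda$ by a non-characteristic deformation argument. The reduction goes as follows. For a sheaf $G$ on $\R$, $\SSi(G)\subset\{\tau\le0\}$ is equivalent to: for all $a<b<c$, the restriction
\[
\rsect((a,c);G)\to\rsect((a,b);G)
\]
is an isomorphism. This is the one-dimensional case of the propagation results of \cite{KS90}*{\S 5.2}, and it is consistent with the example $\cor_{[0,1)}$, whose micro-support at $0$ lies in $\tau\ge0$. Applying it to $G=\roim{\tim}F$, we must show that
\[
\rsect(\opb{\tim}(a,c);F)\to\rsect(\opb{\tim}(a,b);F)
\]
is an isomorphism. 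Equivalently, with $Z=\opb{\tim}([b,c))$ and $U=\opb{\tim}((a,c))$, we must show $\RHom(\cor_Z,F|_U)\simeq0$.

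Next I use that $\tim$ is causal and submersive, so $d\tim\in\lambda$. By Theorem~\ref{th:functor1}~(b),
\[
\SSi(\cor_Z)\subset\{(x;s\,d\tim(x))\,;\,s\ge0\}\subset\lambda .
\]
Using $\RHom(\cor_Z,F)\simeq\rsect(U;\RD'\cor_Z\tens F)$ and the bound $\SSi(F)\cap\lambda\subset T^*_MM$, the product $\RD'\cor_Z\tens F$ is well defined with controlled micro-support. The hypothesis says precisely that $F$ has no micro-support in the codirections $+d\tim$ along the level sets $\opb{\tim}(t)$, $t\in[b,c)$. So, sweeping $t$ from $c$ down to $b$, the microlocal Morse lemma (\cite{KS90}*{Prop.~5.2.1 / Cor.~5.4.19}) should show that the sections of $F$ on $\opb{\tim}((a,t))$ do not depend on $t$. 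This is exactly the required isomorphism.

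The main obstacle is that $\tim$ is not proper on $\supp F$, so the Morse lemma (which needs $\tim$ proper on the support) cannot be applied directly. I would handle this with the properness axiom of a time function. Write $M$ as an increasing union of diamonds
\[
D_k=\futl{K_k}\cap\futla{L_k},
\]
which are compact by the preceding proposition. First I would try to use the sets $\futla{L_k}\cap\opb{\tim}([a,c])$, on which $\tim$ is proper. Replace $F$ by $F_k=\rsect_{\futla{L_k}}F$ or $F_{\futla{L_k}}$, choosing the version whose micro-support stays disjoint from $\lambda$ outside the zero section. Since $\futla{L_k}$ is a $\lambda^a$-set, its conormal contributions lie in $\lambda^a$ (or in $\lambda$ after applying $\RD'$), and I must check that these combine with $\SSi(F)$ without creating points of $\lambda\setminus T^*_MM$, using $\lambda+\lambda=\lambda$ and $\lambda\cap\lambda^a=T^*_MM$ from \eqref{eq:good}. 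Then apply the argument to $F_k$, where $\tim$ is proper on the support in the slab, and pass to the limit $k\to\infty$ using compatibility of $\rsect$ with the exhaustion (a Mittag-Leffler/$\mathrm{R}\varprojlim$ argument). Choosing the correct cut-off operation, so that the micro-support estimate is preserved, is the delicate point.
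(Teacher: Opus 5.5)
Your outline has the same structure as the paper's proof. First there is a proper case: you use the microlocal Morse lemma, the paper uses the direct-image bound of Theorem~\ref{th:functor1}~(a), and the two are equivalent here because $d\tim(x)\in\lambda\setminus T^*_MM$. Then $F$ is cut off along the pasts $Z_k=\futla{K_k}$, on which $\tim$ is proper, and one passes to a limit.

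However, the step you flag as delicate is left unresolved. It is the only non-formal step of the general case, and one of your two candidates fails.

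The correct cut-off is $F_{Z_k}=F\tens\cor_{Z_k}$. Since $Z_k$ is $\lambda^a$-closed, $\SSi(\cor_{Z_k})\subset\lambda^a$. Hence $\SSi(F)\cap\SSi(\cor_{Z_k})^a\subset\SSi(F)\cap\lambda\subset T^*_MM$, which is exactly your hypothesis, and so $\SSi(F_{Z_k})\subset\SSi(F)+\lambda^a$. Now suppose $\xi+\eta\in\lambda$ with $\xi\in\SSi(F)$ and $\eta\in\lambda^a$. Then $\xi\in\lambda+\lambda=\lambda$, so $\xi=0$, and then $\eta\in\lambda\cap\lambda^a=T^*_MM$ by \eqref{eq:good}. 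Thus $\SSi(F_{Z_k})\cap\lambda\subset T^*_MM$.

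The other choice, $\rsect_{Z_k}F=\rhom(\cor_{Z_k},F)$, does not work. It would require control of $\SSi(F)\cap\lambda^a$, which you do not have, and its natural bound $\SSi(F)+\lambda$ genuinely meets $\lambda$. For instance, take $M=\R$, $\lambda=\{\tau\geq0\}$, $\tim=\mathrm{id}$ and $F=\cor_\R$. Then $\rsect_{(-\infty,1]}\cor_\R\simeq\cor_{(-\infty,1)}$, whose micro-support contains $(1;1)$.

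For the same reason the diamonds $D_k$ are of no use here. The future factor $\futl{K_k}$ adds micro-support inside $\lambda$ (think of $\cor_{[0,1]}$ at $0$). Only the one-sided past cut-off is compatible with a hypothesis bearing on $\lambda$ alone.

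Once you use $F_{Z_k}$, your limit step is correct, and it is a legitimate variant of the paper's appeal to \cite{KS90}*{Exe.~5.7}. The restrictions $F_{Z_{k+1}}\to F_{Z_k}$ form a projective system. Choose $K_k\subset\mathrm{Int}(K_{k+1})$, so that the interiors of the $Z_k$ exhaust $M$. Then $\rsect(U;F)\isoto\mathrm{R}\varprojlim_k\rsect(U;F_{Z_k})$ for every open $U$. To see this, compute with an injective resolution of $F$: the transition maps are then surjective, and sections glue because the interiors cover.

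By the proper case, each slab restriction $\rsect(\opb{\tim}((a,c));F_{Z_k})\to\rsect(\opb{\tim}((a,b));F_{Z_k})$ is an isomorphism. Hence so is the corresponding map for $F$, and your one-dimensional criterion on $\R$ gives the result. Stating the target as slab isomorphisms makes the passage to the limit elementary, whereas the paper's route needs a statement about micro-supports of limits.
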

\begin{proof}[Sketch of proof]
 (i)  If $\tim$ is proper on $\supp(F)$, this is immediate from the
  functoriality of the micro-support.

\spa
(ii)   In general, one considers an increasing exhaustive family $\{K_n\}_n$ of compact subsets and one sets $Z_n=\futla{K_n}$. Then  $\SSi(\cor_{Z_n})\subset\lambda^a$, so that 
  $\SSi(F)\cap \SSi(\cor_{Z_n})^a\subset T^*_MM$ and thus 
  $\SSi(F_{Z_n})\subset\SSi(F)+\lambda^a$ and .$\SSi(F_{Z_n})\cap\lambda\subset T^*_MM$.
  Then the result follows for the sheaves $F_{Z_n}$. 
  
  \spa
(iii)  To conclude, apply~\cite{KS90}*{exe.~5.7}. 
\end{proof}

\subsubsection*{The global Cauchy problem --- abstract form}
  \begin{theorem}\label{th:abstglobCP}
    Let $(M,\lambda,\tim)$ be a G-causal manifold, $F\in\Derb(\cor_M)$ with
    \[
      \SSi(F)\cap(\lambda\cup\lambda^a)\subset T^*_MM.
    \]
    For $t_0\in\tim(M)$, set $N=\opb{\tim}(t_0)$. Then $N$ is
    non-characteristic for $F$ and there is a natural isomorphism
    \[
      \rsect(M;F)\isoto\rsect(N;F\vert_{N}).
    \]
  \end{theorem}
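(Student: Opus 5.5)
The plan is to push everything down to $\R$ by $\tim$: show that $\roim{\tim}F$ is locally constant, and then compare both sides of the claimed isomorphism with its stalk at $t_0$.

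\emph{Non-characteristic property.} Since $\tim$ is submersive, $N$ is a closed submanifold of $M$. At $x\in N$ its conormal fiber is $T^*_NM\cap\pi^{-1}(x)=\R\,d\tim(x)$. As $\tim$ is causal, $d\tim(x)\in\lambda$, hence $-d\tim(x)\in\lambda^a$ and $T^*_NM\subset\lambda\cup\lambda^a$. The hypothesis then gives $T^*_NM\cap\SSi(F)\subset T^*_MM$, so the embedding $i\cl N\into M$ is non-characteristic for $\SSi(F)$. By Theorem~\ref{th:functor1}~(b) we obtain $\epb{i}F\simeq\opb{i}F\tens\omega_{N/M}$. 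Here $\omega_{N/M}\simeq\ori_{N/M}[-1]$, and $\ori_{N/M}$ is trivialized by the coorientation $d\tim$. Thus $\epb{i}F\simeq F\vert_N[-1]$.

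\emph{Local constancy of $G\eqdot\roim{\tim}F$.} Lemma~\ref{le:abstglobCP} applied to $(M,\lambda,\tim)$ and $F$ (using $\SSi(F)\cap\lambda\subset T^*_MM$) gives $\SSi(G)\subset\{\tau\le0\}$. Next I would check that $(M,\lambda^a,-\tim)$ is again G-causal. The map $-\tim$ is submersive and causal for $(\R,+)$, since $d(-\tim)\in\lambda^a$. The future and past for $\lambda^a$ are the past and future for $\lambda$, so the properness requirements are unchanged. Applying the lemma to this structure with $\SSi(F)\cap\lambda^a\subset T^*_MM$, and transporting back by $t\mapsto -t$, gives $\SSi(G)\subset\{\tau\ge0\}$. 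Hence $\SSi(G)\subset T^*_\R\R$ and $G$ is locally constant on $\R$. Its support lies in the open interval $\tim(M)$ (connected since $M$ is), where $G$ is therefore constant. I expect this step to be the main obstacle: the verification that the reversed structure is G-causal is where the full hypothesis on $\lambda\cup\lambda^a$ is used, and the delicate non-proper part is already packaged inside Lemma~\ref{le:abstglobCP}.

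\emph{Comparison of both sides with $G_{t_0}$.} For a constant complex on an interval, $\rsect(\tim(M);G)\isoto G_{t_0}$ and $\epb{j}G\simeq G_{t_0}[-1]$, where $j\cl\{t_0\}\into\R$. On one side, $\rsect(M;F)\simeq\rsect(\R;G)\isoto G_{t_0}$. On the other side, $\roim{\tim}$ commutes with $\rsect_{\opb{\tim}(t_0)}$, i.e.\ $\epb{j}\roim{\tim}\simeq\roim{\tim}\epb{i}$ on global sections. This gives
\[
\rsect(N;\epb{i}F)\simeq\rsect(\{t_0\};\epb{j}G)\simeq G_{t_0}[-1].
\]
Combined with $\epb{i}F\simeq F\vert_N[-1]$, we get $\rsect(N;F\vert_N)\simeq G_{t_0}\simeq\rsect(M;F)$. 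Finally, I would check that the composite is the natural restriction morphism $\rsect(M;F)\to\rsect(N;F\vert_N)$. This is a routine compatibility of the maps $\rsect(\R;G)\to G_{t_0}$ and of the non-characteristic isomorphism with restriction, which is not expected to cause difficulty.
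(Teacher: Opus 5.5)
Your proposal is correct and follows the paper's proof: you get non-characteristicity from $\pm d\tim(x)\in\lambda\cup\lambda^a$, you apply Lemma~\ref{le:abstglobCP} to $\lambda$ and, via $(M,\lambda^a,-\tim)$, to $\lambda^a$ so that $\roim{\tim}F$ is constant, and you compare both sides at $t_0$. Your route through $\rsect_N(M;F)\simeq\rsect(N;\epb{i}F)$ and $\epb{i}F\simeq F\vert_N[-1]$ usefully justifies the step $(\roim{\tim}F)_{t_0}\simeq\rsect(N;F\vert_N)$, which the paper's sketch states without comment and which cannot come from the usual base change for inverse images, since $\tim$ need not be proper on $\supp(F)$; one harmless slip is that $\supp(\roim{\tim}F)$ is a priori only contained in $\overline{\tim(M)}$, not in $\tim(M)$, but you do not need that claim, because $\SSi(\roim{\tim}F)\subset T^*_\R\R$ on all of $\R$ already makes $\roim{\tim}F$ constant on $\R$.
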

\begin{proof}[Sketch of proof]
Since $d\tim(x)\in\lambda$,  $\pm d\tim(x)\notin\SSi(F)$ and $N$ is  non-characteristic for $F$.

\spa
Lemma~\ref{le:abstglobCP}  applied  both to $\lambda$ and $\lambda^a$ gives
  $\SSi(\roim{\tim}F)\subset T^*_\R\R$, so $\roim{\tim}F$ is locally constant, hence constant,  on
  $\R$. Therefore, 
  $\rsect(M;F)\simeq\rsect(\R;\roim{\tim}F)\simeq(\roim{\tim}F)_{t_0}\simeq
  \rsect(N;F\vert_{N})$.
\end{proof}

\subsubsection*{The global Cauchy problem for hyperfunctions}

Applying Theorems~\ref{th:abstglobCP} and Corollary~\ref{cor:locahyperCP}, we get:

  \begin{corollary}\label{cor:glohyperCP}
    Let $(M,\lambda,\tim)$ be a real analytic G-causal manifold \lp meaning that both $M$ and $\tim$ are real analytic\rp, $X$ a
    complexification of $M$, $\shm$ a coherent $\shd_X$-module. Let
    $N=\opb{\tim}(t_0)$, $Y\subset X$ a complexification of $N$. Assume
    \[
      \chv_M(\shm)\cap\lambda\subset T^*_MM.
    \]
    Then
    \[
      \rsect\bigl(M;\rhom[\shd_X](\shm,\shb_M)\bigr)\isoto
      \rsect\bigl(N;\rhom[\shd_Y](\shm_Y,\shb_N)\bigr).
    \]
  \end{corollary}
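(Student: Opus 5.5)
We combine Theorem~\ref{th:abstglobCP} with the local Cauchy problem of Corollary~\ref{cor:locahyperCP}. Set $F=\rhom[\shd_X](\shm,\shb_M)\in\Derb(\C_M)$. By the theorem of the preceding section, $\SSi(F)\subset\chv_M(\shm)$.

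The first step is to check the hypothesis of Theorem~\ref{th:abstglobCP}, namely $\SSi(F)\cap(\lambda\cup\lambda^a)\subset T^*_MM$. The assumption only concerns $\lambda$. However, $\chv_M(\shm)$ is stable under the antipodal map $(x;\xi)\mapsto(x;-\xi)$. Indeed, $\chv(\shm)$ is $\C^\times$-conic, hence stable under multiplication by $-1$ on $T^*X$. This map preserves $T^*_MX$ and acts on $T_{T^*_MX}T^*X$ compatibly with the embedding~\eqref{eq:embed}. Hence the Whitney normal cone $C_{T^*_MX}(\chv(\shm))$ and its trace on $T^*M$ are symmetric. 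Therefore $\chv_M(\shm)\cap\lambda^a=(\chv_M(\shm)\cap\lambda)^a\subset T^*_MM$, and so $\SSi(F)\cap(\lambda\cup\lambda^a)\subset T^*_MM$. This symmetry check is the only place needing care: one must verify, in the coordinates $(x,y;\xi,\eta)$, that $-1$ acts linearly on the normal directions and fixes the image of $T^*M$ setwise. Theorem~\ref{th:abstglobCP} then gives
\[
\rsect(M;F)\isoto\rsect(N;F\vert_N).
\]

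The second step identifies $F\vert_N$ using Corollary~\ref{cor:locahyperCP}. Its hyperbolicity condition $T^*_NM\cap\chv_M(\shm)\subset T^*_MM$ must be verified. Since $\tim$ is a submersion, $N$ is a closed real analytic hypersurface and $T^*_NM=\{(x;s\,d\tim(x));x\in N,s\in\R\}$. Because $\tim$ is causal, $d\tim(x)\in\lambda$, hence $s\,d\tim(x)\in\lambda\cup\lambda^a$. By the first step, such a covector lies in $\chv_M(\shm)$ only if it is zero. Corollary~\ref{cor:locahyperCP} then yields
\[
F\vert_N\isoto\rhom[\shd_Y](\shm_Y,\shb_N).
\]

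Composing the two isomorphisms and applying $\rsect(N;\cdot)$ gives the statement. The natural isomorphism is the restriction morphism, since both isomorphisms are induced by restriction to $N$. Apart from the symmetry of $\chv_M(\shm)$, everything is a direct application of the earlier results; no properness issue arises, since it is absorbed into Lemma~\ref{le:abstglobCP}.
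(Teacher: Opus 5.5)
Your proposal is correct and follows the paper's route: the paper proves the corollary by combining Theorem~\ref{th:abstglobCP}, via $\SSi(F)\subset\chv_M(\shm)$, with Corollary~\ref{cor:locahyperCP} applied to the hypersurface $N$. Your antipodal-symmetry argument (from the $\C^\times$-conicity of $\chv(\shm)$) is sound, and it supplies the step the paper leaves implicit: passing from the hypothesis on $\lambda$ to the condition on $\lambda\cup\lambda^a$, and to the hyperbolicity of $T^*_NM$.
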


  In other words, the Cauchy problem for hyperfunctions and hyperbolic $\shd$-modules is
  \emph{globally} well posed.
 
 \begin{remark}
 In~\cite{JS16}, one proves a generalization of Corollary~\ref{cor:glohyperCP}  to higher codimensional submanifold $N$. 
\end{remark}

\subsubsection*{ Examples (wave-type operators) }
\begin{example}\label{exa:1}
Let $(M,\lambda,\tim)$ be a real analytic G-causal manifold and assume that $M=N\times\R$ and $\tim$ is the projection on $\R$.
 As usual, denote by $(t;\tau)$ the coordinates on $T^*\R$. Let $R$ be a differential
    operator on $M$ of order $\leq 2$ whose principal symbol of order $2$ is
    $\geq 0$ on $T^*_MX$ and independent of $\tau$. Then
    \[
      P=\partial_t^2-R
    \]
    is  hyperbolic in the codirections $(x,t;\pm dt)$ and Corollary~\ref{cor:glohyperCP}  applies.

 Indeed, let $\sigma(P)$ be the principal symbol of $P$ and let $(x)$ be a local coordinate system on $N$, $(x,\sqrt{-1}\eta)$ the associated coordinates on $T^*_NY\simeq\sqrt{-1}T^*N$. Let 
 $(t;\theta+\sqrt{-1}\tau)$ denote the coordinate on $\C\tens T^*\R$. Then 
  \[
    \sigma(P)(x,t;\sqrt{-1}\eta,\sqrt{-1}\tau+\theta)
    =\theta^2-\tau ^2+\sigma_2(R)(x,t;\eta)+2\sqrt{-1}\,\theta\tau.
  \]
  Therefore, $\sigma(P)(x,t;\sqrt{-1}\eta,\sqrt{-1}\tau+\theta)\neq0$ for $\theta\neq0$.
    This implies
  \eqn
  &&(x,t;\theta)\notin C_{T^*_MX}(\opb{\sigma(P)}(0)).
  \eneqn
  This last point is not obvious but follows from the local Bochner tube theorem as remarked by Kashiwara \lp see~\cite{BS73}\rp.
  
 Hence, we may apply Corollary~\ref{cor:locahyperCP} with $N\times\{t_0\}$ as a Cauchy hypersurface. 
\end{example}

\begin{example}\label{exa:2}
 In particular, if  $(g_t)_{t\in\R}$ is an analytic family of Riemannian metrics on $N$ with Laplace--Beltrami operators $\Delta_t$ and $a(t)\geq0$, then
\[
P=\partial_t^2-a(t)\Delta_t+\text{lower order terms}
\]
satisfies the hypotheses of Example~\ref{exa:1}.
\end{example}

\begin{example}\label{exa:3}
Assume as above  that  $M=N\times\R$ and assume moreover that  $N$ compact and  connected. 
We choose the cone $\lambda\subset T^*M$ as
\eqn
&&\lambda=T^*_NN\times\{(t;\tau);\tau\geq0\}.
\eneqn
Note that this cone is closed and proper but with empty interior.

Let $x_0=(y_0,t_0)\in N\times\R$. Then $\futl{x_0}=N\times[t_0,+\infty)$.
Since $N$ is compact, the map $\tim$ is a time function.  

In this situation, Corollary~\ref{cor:glohyperCP} applies and we may solve the global Cauchy problem for hyperfunction solutions. 
\end{example}

\begin{remark}
  Hyperbolicity in a codirection depends only on the top-order part of the operator.
  For all such  hyperbolic operators, the global Cauchy problem for
  hyperfunctions is well-posed.

\spa
 The analogous results \beb  \emph{fail}\eb  in general for distributions or $\Cd^\infty$-functions.
Indeed,  \textbf{Hadamard} has shown that the \lp local\rp\, Cauchy problem for
$\partial_t^2-\partial_x$ on $\R^2$ with data on $\{t=0\}$ is
 \beb not  well-posed\eb  for $\Cd^\infty$-functions or distributions, contrarily to hyperfunctions.
\end{remark}

\section{Before the Big Bang}

This final section (see also~\cite{Sc23}) is speculative in nature and is intended only to illustrate the geometric flexibility of sheaf-theoretic methods.

\subsubsection*{A mathematical candidate for the Big Bang}
  Suppose $\tim\cl M\to\R_{>0}$ is a time function and each
  $N_t=\opb{\tim}(t)$ is a compact Riemannian manifold.

  \medskip
  {\bf Question.}
    What happens for $t<0$ if the diameter of $N_t$ goes to $0$ as $t\to0^+$?

  \medskip
  Mathematicians (Manin--Marcolli~\cite{MM14}) and physicists (Penrose~\cite{Pe20}) have proposed
  interpretations of the Big Bang.

  \medskip
  Here is a \emph{purely mathematical} candidate --- in fact, a model for
  any \beb phase transition.\eb 

  \bigskip
   No physics is assumed --- only sheaves and their micro-supports.

\subsubsection*{Extending the universe to $t<0$}
  Represent the universe as a closed ball in $\R^n$ (a manifold with boundary) whose radius grows
  linearly with $t$: the sheaf $\cor_{\{|x|\leq t\}}$, defined for $t\geq0$.

  \medskip
  \noindent
\begin{minipage}[t]{0.45\textwidth}
\vspace{-4cm}{ (i) The micro-support at the boundary is the \emph{interior  conormal},

\medskip\noindent
(ii) extending naturally to $t<0$ gives the \emph{exterior}
conormal --- the micro-support of the constant sheaf on the open cone.}
\end{minipage}\hfill
\begin{minipage}[t]{0.45\textwidth}
      \centering
      \includegraphics[width=\textwidth]{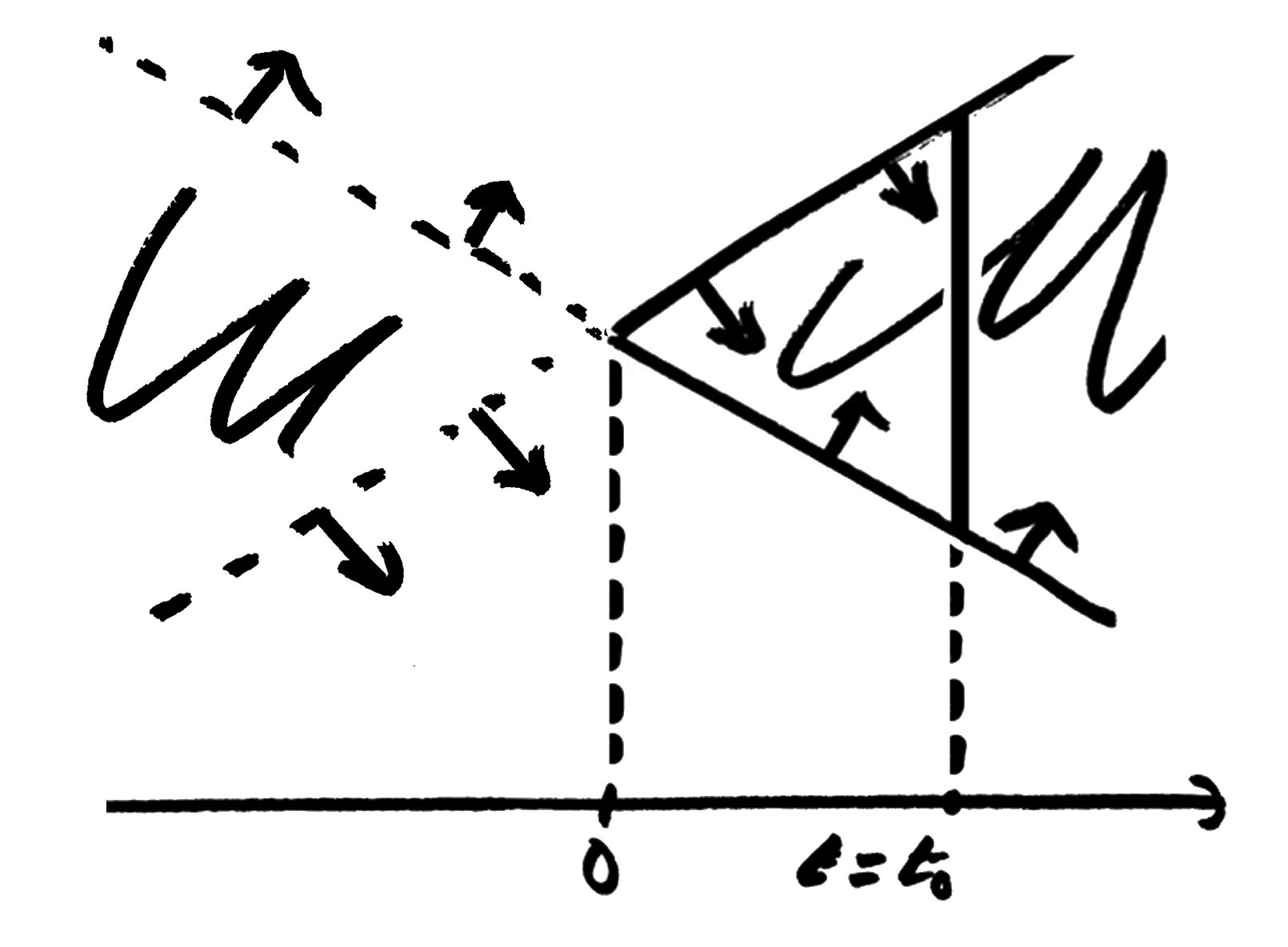}\\
      {\scriptsize  Before the Big Bang}
\end{minipage}

\subsubsection*{A distinguished triangle}
  Set $X=\R^n_x\times\R_t$. How does one glue the sheaf
  $\cor_{\{|x|\leq t\}}$ (defined for $t\geq0$) and the sheaf
  $\cor_{\{|x|<-t\}}$ (defined for $t<0$)?

  \medskip
  With S.~Guillermou and M.~Kashiwara~\cite{GKS12}*{Exa.~3.10, 3.11}, we constructed a distinguished triangle
  \[
    \cor_{\{|x|<-t\}}[n]\to K\to\cor_{\{|x|\leq t\}}\xrightarrow{\ \psi\ }[+1].
  \]
  In other words, the natural extension of the constant sheaf on the closed
  cone for $t\geq0$ is the constant sheaf on the \emph{open} cone, \emph{shifted}
  by the dimension.

\subsubsection*{The Hamiltonian isotopy}
    As we shall see below, $K$ is the \emph{quantization of a Hamiltonian isotopy}.

\medskip
  On $T^*\R^n$ with homogeneous symplectic coordinates $(x;\xi)$, consider
  \[
    \phi_t(x;\xi)=\Bigl(x-t\frac{\xi}{|\xi|}\,;\,\xi\Bigr),\qquad t\in\R.
  \]
  Outside the zero-section, $\SSi(K)$ is the smooth Lagrangian manifold
  obtained as the image of $T^*_{\{0\}}\R^n$ under this isotopy.

  \bigskip
    A phase transition (such as a Big Bang) is the instant at which the
    \emph{rank} of the projection
    $\pi\cl\Lambda\to M$ of a smooth Lagrangian
    submanifold $\Lambda\subset T^*M$ \emph{drops}.

\subsubsection*{A periodic Big Bang on the sphere}
  Replace $\R^n_x$ by a Riemannian manifold (positive convexity radius and
  injectivity radius), using the Hamiltonian isotopy associated with
  $\vvert\xi\vvert_x$.

\noindent
\begin{minipage}[t]{0.45\textwidth}
\vspace{-3cm}{ For the Euclidean $n$-sphere $M=\BBS^n$ (with $n\geq 2$): the sheaf
      obtained has a \emph{shift that jumps by the dimension at each pole}.
      Here the time is the  vertical line from the top (past) to the bottom
      (future).}
\end{minipage}\hfill%
\begin{minipage}[t]{0.45\textwidth}
   \includegraphics[width=\textwidth]{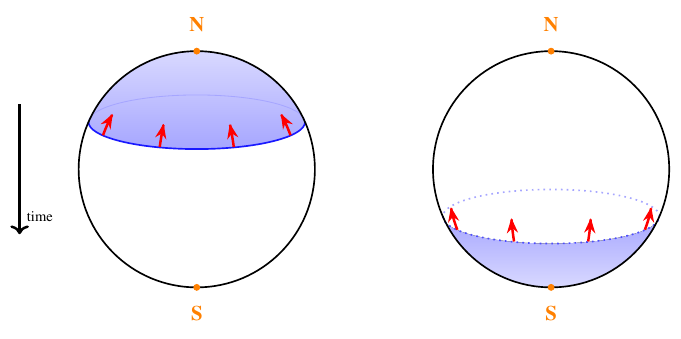}\\
      {\scriptsize Periodic Big Bang}
 \end{minipage}

\providecommand{\bysame}{\stLeavevmode\hbox to3em{\hrulefill}\thinspace}

\vspace*{1cm}
\noindent
\parbox[t]{21em}
{\scriptsize{
Pierre Schapira\\
Sorbonne Universit{\'e}, CNRS, Universit{\'e} Paris-Cit{\'e} \\
IMJ-PRG, 4 place Jussieu, 75252 Paris Cedex 05 France\\
e-mail: pierre.schapira@imj-prg.fr\\
http://webusers.imj-prg.fr/\textasciitilde pierre-schapira/
}}

\end{document}